   \def\MR#1{}
\newcommand{\blob}{\rule[.2ex]{.8ex}{.8ex}}
\newcommand{\bigo}{\mathcal{O}}
\newcommand\smallo{
  \mathchoice
    {{\scriptstyle\mathcal{O}}}
    {{\scriptstyle\mathcal{O}}}
    {{\scriptscriptstyle\mathcal{O}}}
    {\scalebox{.25}{$\scriptscriptstyle\mathcal{O}$}}
  }
\newcommand{\R}{\mathbb{R}}
\newcommand{\C}{\mathbb{C}}
\newcommand{\N}{\mathbb{N}}
\newcommand{\Z}{\mathbb{Z}}
\newcommand{\Q}{\mathbb{Q}}
\newcommand{\T}{\mathbb{T}}
\newcommand{\Jd}{\textbf{J}}
\newcommand{\DC}{{\rm DC} (\T^d)}
\newcommand{\sDC}{{\rm DC} (\T)}
\newcommand{\moc}{w}
\newcommand{\bsum}[1]{\obs^{({#1})} \,} 
\newcommand{\bsumap}[1]{\obs_n^{({#1})} \,} 
\newcommand{\bsumexpk}[1]{e_{\kb}^{({#1})} \,} 
\newcommand{\obs}{\phi}
\newcommand{\fc}[1]{\widehat{#1}} 
\newcommand{\esum}{\mathscr{E}}
\newcommand{\x}{\boldsymbol{x}}
\newcommand{\kb}{\boldsymbol{k}}
\newcommand{\omb}{\boldsymbol{\om}}
\newcommand{\abs}[1]{\left| #1 \right|} 
\newcommand{\norm}[1]{\left\|#1\right\|} 
\newcommand{\normT}[1]{\left\|#1\right\|} 
\newcommand{\less}{\lesssim}
\newcommand{\more}{\gtrsim}
\newcommand{\ep}{\epsilon}
\newcommand{\om}{\omega}
\newcommand{\ga}{\gamma}
\newcommand{\dist}{{\rm dist}}
\newcommand{\const}{{\rm c o n s t  }}
\DeclarePairedDelimiter\floor{\lfloor}{\rfloor}
\newsavebox\myboxA
\newsavebox\myboxB
\newlength\mylenA
\newcommand*\xoverline[2][0.75]{%
    \sbox{\myboxA}{$\m@th#2$}%
    \setbox\myboxB\null
    \ht\myboxB=\ht\myboxA%
    \dp\myboxB=\dp\myboxA%
    \wd\myboxB=#1\wd\myboxA
    \sbox\myboxB{$\m@th\overline{\copy\myboxB}$}
    \setlength\mylenA{\the\wd\myboxA}
    \addtolength\mylenA{-\the\wd\myboxB}%
    \ifdim\wd\myboxB<\wd\myboxA%
       \rlap{\hskip 0.5\mylenA\usebox\myboxB}{\usebox\myboxA}%
    \else
        \hskip -0.5\mylenA\rlap{\usebox\myboxA}{\hskip 0.5\mylenA\usebox\myboxB}%
    \fi}
\newcommand{\lbar}{\xoverline{l}}
\theoremstyle{plain}
\newtheorem{theorem}{Theorem}
\newtheorem{proposition}{Proposition}
\newtheorem{lemma}{Lemma}
\numberwithin{equation}{section}
\theoremstyle{remark}
\newtheorem{remark}{Remark}
\theoremstyle{definition}
\newtheorem{definition}{Definition}
\title[Birkhoff means of uniquely ergodic toral maps]{Uniform convergence rate for Birkhoff means of certain uniquely ergodic toral maps}
\date{}
\begin{document}

\author[S. Klein]{Silvius Klein}
\address[S. Klein]{Departamento de Matem\'atica, Pontif\'icia Universidade Cat\'olica do Rio de Janeiro (PUC-Rio), Brazil}
\email{silviusk@impa.br}

\author[X. Liu]{Xiao-Chuan Liu}
\address[X. Liu]{Instituto de Matem\' atica e Estat\' istica da Universidade de S\~ao Paulo,
R. do Mat\~ ao, 1010 - Vila Universitaria, S\~ ao Paulo, Brazil} 
\email{lxc1984@gmail.com}

\author[A. Melo]{Aline Melo}
\address[A. Melo]{Departamento de Matem\'atica, Pontif\'icia Universidade Cat\'olica do Rio de Janeiro (PUC-Rio), Brazil}
\email{alinedemelo.m@gmail.com}

\begin{abstract}
We obtain estimates on the uniform convergence rate of the Birkhoff average of a continuous observable over torus translations and affine skew product toral transformations.
The convergence rate depends explicitly on the modulus of continuity of the observable and on the arithmetic properties of the frequency defining the transformation. Furthermore, we show that for the one dimensional torus translation, these estimates are nearly optimal.
\end{abstract}
 
\maketitle

\section{Introduction and statements}\label{introduction}
Consider a {\em uniquely ergodic} and minimal dynamical system $(X, \mu, T)$. That is, $X$ is a compact metric space, $\mu$ is a Borel probability measure on $X$ and $T\colon X \to X$ is a continuous transformation. 

Let $\obs \colon X \to \R$ be a {\em continuous} observable and for every integer $N$ let
$$\bsum{N} (x) := \obs (x) + \obs (T x) + \ldots + \obs (T^{N-1} x) $$
denote the corresponding Birkhoff sum. 

Birkhoff's ergodic theorem in this setting asserts that {\em uniformly},
$$\frac{1}{N} \, \bsum{N}  \to \int_X \obs \, d \mu \quad  \text{ when } N \to \infty .$$ 

It is then natural to ask whether for certain more specific systems there is an {\em effective} rate of uniform convergence of the Birkhoff averages, one which depends explicitly on the data.

Standard examples of uniquely ergodic dynamical systems are the torus translation by an incommensurable frequency and  the affine skew product toral transformation. These are precisely the systems considered in this paper.  Let us introduce them more formally.

\medskip

Let $\mathbb{T}=\mathbb{R}/\mathbb{Z}$ be the one dimensional torus endowed with the Lebesgue measure. Let $\om \in (0, 1) \setminus \Q$ be a frequency and define $T_\om \colon \T\to \T$,  $T_\om x = x + \om$
to be the corresponding translation. 
Moreover, a continuous observable $\obs \colon \T \to \R$ may also be regarded as a $1$-periodic function on $\R$ or as a function on $[0, 1]$ with $\obs (0) = \obs (1)$. 

\medskip
 
Let $\T^d = (\R/\Z)^d$ with $d\ge1$ be the $d$-dimensional torus endowed with the Haar measure and also regarded as an additive compact group. 
Points on $\T^d$ are written as $\x = (x_1, \ldots, x_d)$ and we choose on $\T^d$  the distance given by the norm $\abs{\x} := \max_{1\le j \le d} \abs{x_j}$. A continuous observable  $\obs \colon \T^d \to \R$ may also be regarded as a $1$-periodic function in each variable.

Given a frequency vector $\omb = (\om_1, \ldots, \om_d) \in \T^d$, define the torus translation 
$$T_{\omb} \colon \T^d \to \T^d \quad  T_{\omb} \x := \x + \omb \, .$$

If $1, \om_1, \ldots, \om_d$ are linearly independent over $\Q$, that is, if for all nonzero $\kb \in \Z^d$, $\kb \cdot \omb \neq 0 \mod 1$,  then $T_{\omb}$ is an ergodic (in fact, a uniquely ergodic) transformation. 

The second type of toral transformation considered in this paper is the affine skew product.\footnote{In other contexts this transformation is referred to as the skew-shift.} Given an irrational frequency $\om\in\T$, define 
$$S_\om \colon \T^2 \to \T^2, \quad S_\om (x_1, x_2) = (x_1+ x_2, x_2  + \om) \, .$$

More generally, for any dimension $d\ge 2$, let
$$S_\om \colon \T^d \to \T^d, \quad 
S_\om (x_1, x_2, \ldots, x_d) = (x_1+ x_2, x_2  + x_3, \ldots, x_d + \om) \, .$$

The map $S_\om$ is also an example of a uniquely ergodic transformation (see~\cite{Furstenberg-SE}).

\medskip

Given a uniquely ergodic and minimal dynamical system $(X, \mu, T)$ and a conti\-nuous observable $\obs$,  let
$$\rho_N := \norm{ \frac{1}{N} \, \bsum{N} - \int_X \obs \, d \mu}_\infty = \smallo(1) \quad \text{as } \ N \to \infty$$
be the convergence rate of the Birkhoff means of $\obs$. 
Thus the goal is to make the statement $\rho_N = \smallo(1)$ more explicit in the setting of the examples described above.

The ideal scenario is when $ \obs - \int_X \obs $ is a {\em co-boundary}, that is, when there exists a continuous function $\psi \colon X \to \R$  satisfying the co-homological equation:  
$$\obs (x) - \int_X \obs = \psi \circ T \, (x) - \psi  (x) \quad \text{for all } x \in X \, .$$

Then clearly
$$\frac{1}{N} \, \bsum{N} (x) - \int_X \obs = \frac{\psi (T^n x) - \psi (x)}{N} \, ,$$
showing  that in this case,
$$\rho_N = \bigo\left(\frac{1}{N}\right) \, .$$ 

Using Gottschalk-Hedlund theorem one can see that this rate of convergence  is in fact optimal.

\medskip

In the case of the one-dimensional torus translation, a simple Fourier analysis argument shows that for the co-homological equation to have a solution, a certain amount of regularity of the observable $\obs$ and good arithmetic properties of the frequency $\om$ are both {\em necessary} conditions.  Furthermore, by means of multipliers on Fourier coefficients, conditions of this kind  can be shown to be {\em sufficient}, as explained below.

\begin{definition}\label{sDC intro}  If $t\in\R$, let $\norm{t} := \dist (t, \Z)$ denote its distance to the nearest integer. We say that a frequency $\om \in \T$ satisfies a Diophantine condition if there exists $\gamma > 0$ such that for all $k \in \Z \setminus \{0\}$ we have
\begin{equation} \label{sDC}
\norm{ k\om}   \ge \frac{\gamma}{\abs{k} \, \log^2 \left( \abs{k} + 1\right)} \, .
\end{equation}
\end{definition}

Denote by $\sDC_\ga$ the set of all frequencies $\om \in \T$ satisfying~\eqref{sDC} and note that its measure is $1 - \bigo (\ga)$. Hence the set of frequencies satisfying a Diophantine condition (for some $\ga>0$) has full measure. 

\medskip

Assume that the observable $\obs \in C^{1+\alpha} (\T)$ (that is, $\obs$ is differentiable and its derivative is $\alpha$-H\"{o}lder continuous) and 
 that the frequency $\om \in \T$ satisfies the Diophantine condition~\eqref{sDC}. Then the corresponding co-homological equation has a solution (see Section I.8.10 in~\cite{Katznelson}).\footnote{We note that some arithmetic condition in the spirit of~\eqref{sDC} is indeed necessary, as for any Liouville frequency $\om$ there is a smooth observable $\obs$ for which there is no measurable solution to the corresponding co-homological equation.}
Consequently, the rate of convergence $\{\rho_N\}_{N\ge1}$ of the Birkhoff averages over $T_\om$ is optimal, namely of order $\frac{1}{N}$.


The goal is then to address the question of the  convergence rate $\{\rho_N\}_{N\ge1}$ of the Birkhoff means in the {\em low regularity} setting, for instance when the observable $\obs$ is only H\"{o}lder continuous (or in fact when $\obs$ has any other modulus of continuity).  

\medskip

Let us recall these notions in the general setting of a compact metric space $(X, d)$. 
Given $\alpha \in (0, 1]$, 
a function $\obs \colon X\to\R$ is $\alpha$-H\"older continuous if there exists $C < \infty$ such that
$$\abs{ \obs (x) - \obs (y) } \le C d (x, y)^{\alpha} \quad \text{for all } \ x, y\in X .$$

Let $C^\alpha(X)$ be the Banach space of $\alpha$-H\"older continuous functions on $X$, endowed with the usual H\"older norm
$$\norm{\obs}_\alpha := \norm{\obs}_\infty + \sup_{x\neq y} \, \frac{ \abs{ \obs (x) - \obs (y) }  }{d (x, y)^{\alpha} } \, .$$

A map
 $\moc \colon [0,+\infty)\to [0, +\infty)$ is called\footnote{There are other, more general notions of modulus of continuity; however, in our setting, this definition is not restrictive.} a modulus of continuity if it is strictly increasing, sub-additive, continuous and $\moc (0) =0$.
 
Then a continuous  function $\obs \colon X\to\R$ is said to have modulus of continuity $\moc$ if  for some constant $C < \infty$,
 $$\abs{\obs(x)-\obs(y)}\le C \, \moc(d (x, y)) \quad \text{for all } \ x, y\in X .$$

For example, the modulus of continuity $\moc (h)= h^\alpha = e^{ - \alpha\, \log \frac{1}{h}}$, where $0<\alpha\leq 1$, defines $\alpha$-H\"older continuity; $\moc (h)=  e^{- \alpha\, (\log \frac{1}{h})^\kappa}$ for some constants $\alpha, \kappa \in (0, 1]$  defines weak-H\"older continuity, as $\kappa=1$ corresponds to H\"older continuity; $\moc (h)=  (\log \frac{1}{h})^{-1} $ defines $\log$-H\"older continuity.

Any continuous (hence uniformly continuous) function $\obs \colon X \to \R$ has a modulus of continuity, namely the function
$$\moc_\obs (h) := \sup \left\{ \abs{ \obs(x)-\obs(y)} \colon x, y \in X \ \text{ with } \ d (x, y) \le h \right\} \, .$$

Given a modulus of continuity $\moc$, let $C^\moc (X)$ be the Banach space of continuous functions on $X$ endowed with the norm
$$\norm{\obs}_\alpha := \norm{\obs}_\infty + \sup_{x\neq y} \, \frac{ \abs{ \obs (x) - \obs (y) }  }{ \moc (d (x, y) ) } \, .$$

\medskip

Returning to the one-dimensional torus translation by a frequency $\om$, we recall Denjoy-Koksma's inequality, proven by M. Herman (see~\cite[Chapter VI.3]{Herman_conjugaison}). 
Let $\obs \colon \T \to \R$ be a function of bounded variation and let $\frac{p}{q}$ be such that $\abs{\om - \frac{p}{q}} \le \frac{1}{q^2}$ and $\gcd (p, q) = 1$ (for instance, $\frac{p}{q}$ is a convergent of $\om$). Then
$$\rho_q = \norm{ \frac{1}{q} \, \bsum{q}  - \int_\T \obs}_\infty \le \frac{\text{ var } {(\obs)}}{q} \, .$$

The same type of result also holds for H\"{o}lder continuous observables (or indeed for observables with any modulus of continuity): given $\obs \in C^\alpha (\T)$, where $0<\alpha\le 1$, for every $q$ as above we have:
\begin{equation}\label{D-K ineq}
\rho_q = \norm{ \frac{1}{q} \, \bsum{q}  - \int_\T \obs}_\infty \le \frac{\norm{\obs}_\alpha}{q^\alpha} \, .
\end{equation}

The Ostrowski numeration says that any integer $N$ can be represented as $$N = b_0 + b_1 q_1 + \ldots + b_s q_s + b_{s+1} q_{s+1} \, ,$$ 
where $\{q_n\}_{n\ge1}$ are the denominators of the principal convergents of $\om$ and the coefficients $b_n$ are positive integers such that $b_n < \frac{q_{n+1}}{q_n}$ for all $0\le n\le s+1$. 

Diophantine conditions like~\eqref{sDC} can be characterized by a bound on the lacuna\-rity of the sequence $\{q_n\}_{n\ge1}$. Thus Ostrowski's numeration can be used to derive extensions of Denjoy-Koksma's inequality to {\em all } Birkhoff sums (see Proposition 4.2.8 in~\cite{Queffelec}). That is, the uniform convergence rate $\{\rho_N\}_{N\ge1}$ of the Birkhoff averages over the torus translation $T_\om$ of an observable $\obs$ can be described explicitly in terms of the modulus of continuity of $\obs$ and the arithmetic properties of $\om$. 

\medskip

In this paper we present a new proof (and a slightly sharper version) of the aforementioned extension of Denjoy-Koksma's inequality. Our approach uses Fourier analysis tools (effective approximation by trigonometric polynomials along with an effective rate of decay of the Fourier coefficients of continuous functions) and estimates of (in this case, some simple) exponential sums. 

\begin{theorem} \label{extension D-K} 
Assume that the observable $\obs$ is an $\alpha$-H\"{o}lder continuous function on $\T$ and 
 that the frequency $\om \in \T$ satisfies the Diophantine condition~\eqref{sDC}.
Then for all integers $N$ we have\footnote{In the statements of the main results, and indeed  throughout the whole paper, $\const$ stands for a {\em universal} constant, one which could (but will not) be specified, and which may change from an estimate to another. Moreover, for two {\em varying} quantities $a$ and $b$, $a \less b$ will mean that $a \le \const \, b$, where $\const$ is such a universal constant, while $a \asymp b$ will mean that  $a \less b$ and $b \less a$. Furthermore, $a \less_d b$ will mean that $a \le C b$, where the constant $C$, rather than simply being universal, only depends on the parameter $d$.
}
\begin{equation}\label{extension D-K eq}
 \norm{ \frac{1}{N} \, \bsum{N}  - \int_\T \obs}_\infty \le \const \, \left(\frac{1}{\ga} \log  \frac{1}{\ga}\right) \, \norm{\obs}_\alpha \, \frac{\log^{3 \alpha} N}{N^\alpha} \, .
 \end{equation}
 \end{theorem}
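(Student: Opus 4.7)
My approach is to approximate $\obs$ by a trigonometric polynomial $P_K$ of degree at most $K$ via Jackson's theorem, so that $\int_\T P_K = \int_\T \obs$, $\|\obs - P_K\|_\infty \le c\, \|\obs\|_\alpha / K^\alpha$, and (after comparing $\hat P_K$ with $\hat\obs$) $|\hat{P_K}(k)| \le c\, \|\obs\|_\alpha/|k|^\alpha$ for $0<|k|\le K$. Split the Birkhoff error as
\[
 \tfrac{1}{N}\bsum{N}\obs - \textstyle\int_\T \obs \;=\; \bigl(\tfrac{1}{N}\bsum{N} - \textstyle\int_\T\bigr)(\obs - P_K) \;+\; \bigl(\tfrac{1}{N}\bsum{N} - \textstyle\int_\T\bigr) P_K.
\]
The first piece is bounded uniformly by $2\|\obs - P_K\|_\infty \lesssim \|\obs\|_\alpha/K^\alpha$, so only the polynomial piece requires Fourier analysis.

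For the polynomial piece, expand in characters:
\[
 \bigl(\tfrac{1}{N}\bsum{N} - \textstyle\int_\T\bigr) P_K(x) \;=\; \sum_{0<|k|\le K} \hat{P_K}(k)\, e^{2\pi i k x}\, S_k(N), \qquad S_k(N):=\tfrac{1}{N}\sum_{n=0}^{N-1} e^{2\pi i k n \om}.
\]
Summing the inner geometric series gives the simple estimate $|S_k(N)| \le \min\bigl(1,\tfrac{1}{2N\|k\om\|}\bigr)$. Combined with the Diophantine bound $\tfrac{1}{\|k\om\|}\le \tfrac{|k|\log^2(|k|+1)}{\ga}$ coming from~\eqref{sDC}, the whole problem reduces to controlling the weighted sum $\tfrac{1}{N}\sum_{0<|k|\le K}\abs{\hat{P_K}(k)}/\|k\om\|$, followed by an optimal choice of $K$ as a function of $N$ and $\ga$.

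The main obstacle lies in obtaining a sharp estimate on this weighted sum. A term-by-term application of the DC bound gives only $\sum \abs{\hat{P_K}(k)}/\|k\om\| \lesssim \|\obs\|_\alpha K^{2-\alpha}\log^2 K/\ga$, and optimization leads to the suboptimal rate $N^{-\alpha/2}$ rather than the sharp $N^{-\alpha}$. To recover the full rate one must exploit the sparsity of the small values of $\|k\om\|$, which for $\om$ satisfying~\eqref{sDC} are concentrated near multiples of the continued-fraction denominators of $\om$. Concretely, the three-distance theorem (equivalently, the discrepancy estimate $D_K^*(\{n\om\})\lesssim \log^3 K \cdot \tfrac{1}{K\ga}\log\tfrac{1}{\ga}$) gives the cruder-but-sufficient bound $\sum_{k=1}^K 1/\|k\om\| \lesssim K\log^3 K\cdot \tfrac{1}{\ga}\log\tfrac{1}{\ga}$; Abel summation against the Hölder weight $|\hat{P_K}(k)|\lesssim \|\obs\|_\alpha/|k|^\alpha$ then yields $\sum_{k=1}^K \abs{\hat{P_K}(k)}/\|k\om\| \lesssim \|\obs\|_\alpha K^{1-\alpha}\log^3 K\cdot \tfrac{1}{\ga}\log\tfrac{1}{\ga}$. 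Balancing the Jackson error $\|\obs\|_\alpha/K^\alpha$ against the resulting Fourier contribution leads to the optimal choice $K\asymp N\ga/\log^3 N$, which produces the stated bound $\const\, \tfrac{1}{\ga}\log\tfrac{1}{\ga} \cdot \|\obs\|_\alpha\, \log^{3\alpha} N / N^\alpha$.
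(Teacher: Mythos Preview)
Your approach is essentially identical to the paper's: Jackson approximation, Fourier expansion of the polynomial piece, the geometric-series bound $|S_k(N)|\le 1/(2N\|k\omega\|)$, the key estimate $\sum_{k=1}^{K}1/\|k\omega\|\lesssim K\log^3 K\cdot\tfrac{1}{\gamma}\log\tfrac{1}{\gamma}$, summation against the H\"older weight $|k|^{-\alpha}$ (your Abel summation is the paper's block decomposition over the intervals $[q_j,q_{j+1})$), and optimization of the cutoff (the paper takes $n=\lfloor N/\log^3 N\rfloor$, without the extra factor $\gamma$). The only point to tighten is your justification of the key sum: neither the three-distance theorem nor a discrepancy bound yields $\sum_{k\le K}1/\|k\omega\|$ directly; the paper instead proves it by the elementary pigeonhole observation that for a best-approximation denominator $q$ the points $\{k\omega\bmod 1: 1\le k<q\}$ are $1/(2q)$-separated, hence $\sum_{1\le k<q}1/\|k\omega\|\lesssim q\log q$, and then passes to general $K$ via $q_s\le K<q_{s+1}$ together with the lacunarity bound $q_{s+1}\le\tfrac{1}{\gamma}q_s\log^2(q_s+1)$ coming from~\eqref{sDC}.
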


\begin{remark}
A similar result holds for observables with any modulus of continuity $\moc$. Thus instead of the rate of convergence of order (nearly) $\frac{1}{N^\alpha}$, valid for $\alpha$-H\"older observables, one has a rate of order (nearly) $w \left( \frac{1}{N}\right)$.
Similar results also hold for frequencies satisfying much more general arithmetic conditions.
\end{remark}

The general Fourier analysis approach which we employ to establish Theorem~\ref{extension D-K} allows us to prove that the Denjoy-Koksma inequality is optimal; in particular, the convergence rate~\eqref{extension D-K eq} is also (nearly) optimal.

\begin{theorem}\label{limitations intro}
Let $\alpha \in (0, 1]$. For almost every frequency $\om\in\T$ (thus for a.e. Diophantine frequency) there exists an observable $\obs\in C^\alpha(\T)$ and a subsequence $\{q_{N_k}\}_{k\ge1}$ of denominators of principal convergents of $\om$ such that the corresponding Birkhoff averages of $\obs$ over the transformation $T_\om$ satisfy
\begin{equation*}
\abs{\frac{1}{q_{N_k}} \obs^{(q_{N_k})} (0) - \int_\T \obs } \ge \const \frac{1}{q_{N_k}^\alpha} \quad \text{for all } \ k\ge1 .
\end{equation*}
\end{theorem}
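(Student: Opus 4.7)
The plan is to construct, for a.e.\ $\om$ and for $\alpha\in(0,1)$ (the boundary $\alpha=1$ requires a separate argument, indicated below), a single lacunary Fourier series $\obs\in C^\alpha(\T)$ whose Birkhoff sum at the origin is, at each scale $q_{n_\ell}$ along a carefully sparsified subsequence of convergent denominators, dominated by a single diagonal harmonic; this forces the Birkhoff average (since $\int_\T\obs=0$) to have size $\ge \const\cdot q_{n_\ell}^{-\alpha}$.

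Let $\{p_n/q_n\}$ be the principal convergents and $\{a_n\}$ the partial quotients of $\om$. For a.e.\ $\om$, by ergodicity of the Gauss map, the event $\{a_{n+1}=7\}$ has positive asymptotic frequency in $n$ (Gauss-Kuzmin) and so occurs for infinitely many $n$. I select inductively a subsequence $\{n_k\}$ of such admissible indices. This gives $q_{n_k+1}\le 8q_{n_k}$ and hence $\norm{q_{n_k}\om}\le 1/q_{n_k+1}\le 1/(7q_{n_k})$; by spacing consecutive $n_k$'s far enough apart inside the admissible set, I further enforce the super-geometric growth $q_{n_{k+1}}\ge q_{n_k}^{\,k}$. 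Define
$$\obs(x):=\sum_{k\ge 1}q_{n_k}^{-\alpha}\cos(2\pi q_{n_k} x).$$
Since $\{q_{n_k}\}$ is Hadamard lacunary with arbitrarily large ratios and the coefficients are $O(q_{n_k}^{-\alpha})$, Zygmund's classical lacunary-series criterion yields $\obs\in C^\alpha(\T)$, and clearly $\int_\T\obs=0$.

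Writing $q_{n_\ell}\om=p_{n_\ell}+\delta_\ell$ with $|\delta_\ell|\le 1/(7q_{n_\ell})$, decompose
$$\frac{1}{q_{n_\ell}}\bsum{q_{n_\ell}}(0)=\sum_{k\ge 1}\frac{q_{n_k}^{-\alpha}}{q_{n_\ell}}\sum_{j=0}^{q_{n_\ell}-1}\cos(2\pi q_{n_k} j\om).$$
I analyze the three pieces. For $k=\ell$ (diagonal), since $|2\pi j\delta_\ell|\le 2\pi/7<\pi/2$ for $0\le j<q_{n_\ell}$, every cosine is $\ge \cos(2\pi/7)>0$, so this term is $\ge \cos(2\pi/7)\,q_{n_\ell}^{-\alpha}$. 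For $k>\ell$ (high modes), the trivial bound yields a contribution $\le\sum_{k>\ell}q_{n_k}^{-\alpha}\le 2q_{n_{\ell+1}}^{-\alpha}\le 2q_{n_\ell}^{-\alpha\ell}=\smallo(q_{n_\ell}^{-\alpha})$. For $k<\ell$ (low modes), the geometric-sum bound together with $q_{n_k+1}\le 8q_{n_k}$ gives
$$\Bigl|\sum_{j=0}^{q_{n_\ell}-1}e^{2\pi i q_{n_k} j\om}\Bigr|\le\frac{1}{2\norm{q_{n_k}\om}}\le 8q_{n_k},$$
so this contribution is $\le 8q_{n_\ell}^{-1}\sum_{k<\ell}q_{n_k}^{1-\alpha}\le 16q_{n_{\ell-1}}^{1-\alpha}/q_{n_\ell}=\smallo(q_{n_\ell}^{-\alpha})$ for large $\ell$, using $q_{n_\ell}\ge q_{n_{\ell-1}}^{\ell-1}$ and $\alpha<1$. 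Adding the three estimates yields, for all sufficiently large $\ell$,
$$\frac{1}{q_{n_\ell}}\bsum{q_{n_\ell}}(0)\ge \tfrac12 \cos(2\pi/7)\,q_{n_\ell}^{-\alpha},$$
which is the claimed bound upon setting $N_k:=n_k$ restricted to such large indices.

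The main technical obstacle is handling the low-frequency off-diagonal tail ($k<\ell$): no cancellation finer than $1/\norm{q_{n_k}\om}$ is available, and $q_{n_k+1}$ could in principle be arbitrarily large, so without a uniform upper bound on $a_{n_k+1}$ this tail could overwhelm the diagonal; the choice of admissible $\{n_k\}$ (Gauss-Kuzmin-typical, $a_{n_k+1}=7$) combined with the super-geometric spacing is what makes it negligible. The endpoint $\alpha=1$ is not directly covered by the above, because Zygmund's criterion only places such $\obs$ in the Zygmund class $\Lambda_1\supsetneq \mathrm{Lip}(\T)$ and the low-mode tail ceases to decay (the exponent $1-\alpha$ vanishes); one can handle it by replacing the trigonometric building blocks with rescaled Lipschitz bumps damped by square-summable weights, then running a parallel diagonal-versus-tail analysis on a further sparsification of the subsequence.
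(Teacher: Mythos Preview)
For $\alpha\in(0,1)$ your argument is correct and follows a genuinely different route from the paper. The paper builds $\obs$ from \emph{all} convergent denominators, $\obs(x)=\Re\sum_{k\ge1}q_k^{-\alpha}e(q_kx)$, and selects the scales $q_m$ via Borel--Bernstein at indices with \emph{large} partial quotient ($a_{m+1}\more m$, i.e.\ $q_{m+1}\more m\,q_m$); its low-mode tail is controlled through the refined two-sided bound $\abs{\esum_{q_m}(q_k\om)}\less q_k q_{k+1}/(q_m q_{m+1})$, which exploits the smallness of the numerator $\abs{1-e(q_mq_k\om)}\asymp\norm{q_mq_k\om}\le q_k\norm{q_m\om}$ as well. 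You instead build a \emph{sparse} series over a subsequence $\{q_{n_k}\}$ with \emph{fixed} partial quotient $a_{n_k+1}=7$ (found via Gauss--Kuzmin rather than Borel--Bernstein), then thin it further to super-geometric growth; the cap $a_{n_k+1}=7$ pins $\norm{q_{n_k}\om}^{-1}\less q_{n_k}$, so the crude one-sided geometric-sum bound already suffices for the low tail. The paper's construction is more economical (no sparsification, no auxiliary growth condition, and it feeds directly into the companion statement about scales $N_m\asymp q_{m+1}$), while yours makes the diagonal dominance particularly transparent.

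At $\alpha=1$ you correctly flag two failures: the lacunary cosine series lies only in the Zygmund class, not in $\mathrm{Lip}(\T)$, and your low-mode bound degenerates to $\asymp\ell/q_{n_\ell}$, which swamps the diagonal $\asymp 1/q_{n_\ell}$. However, the fix you sketch (Lipschitz bumps with square-summable damping) does not deliver the theorem as stated: any extra damping $c_\ell\to0$ turns the diagonal contribution into $c_\ell/q_{n_\ell}$, so the \emph{uniform} constant in the conclusion is lost. The low-mode issue by itself is repairable by importing the paper's two-sided exponential-sum estimate (giving $\sum_{k<\ell}q_{n_k}^{-1}\cdot q_{n_k}^2/q_{n_\ell}^2\less q_{n_{\ell-1}}/q_{n_\ell}^2=\smallo(q_{n_\ell}^{-1})$), but the regularity obstruction is intrinsic to trigonometric building blocks and is not resolved by your sketch. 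It is worth noting that the paper's own H\"older-regularity argument shares this lacuna at $\alpha=1$: the step $\sum_{k<l}q_k^{1-\alpha}\less q_{l-1}^{1-\alpha}$ fails there, yielding only an $h\log(1/h)$ modulus.
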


\begin{remark}
A similar result holds for any other modulus of continuity $\moc$: there exists an observable $\obs\in C^\moc (\T)$ such that the rate of convergence of the Birkhoff means along a subsequence of denominators of convergents of $\om$ is slower than $\moc \left(\frac{1}{N}\right)$. 

Moreover, our construction also shows the (near) optimality of the convergence rate $\{\rho_N\}_{N\ge1}$ for frequencies satisfying more general arithmetic conditions. 

Furthermore, this construction can be tweaked to show the (probably known) fact that given any (arbitrarily slow) rate of convergence to zero $\{\rho_N\}_{N\ge1}$, for an appropriate Liouvillean frequency $\om$, there is an {\em analytic} observable $\obs$ so that the convergence rate of the corresponding Birkhoff averages is slower than $\{\rho_N\}_{N\ge1}$.
\end{remark}

Returning to the extension of Denjoy-Koksma's inequality in Theorem~\ref{extension D-K}, we note that the estimate~\eqref{extension D-K eq}  is the prototype of what we aim for regarding the other dynamical systems considered (the higher dimensional torus translation and the affine skew product):  the dependence on the data (frequency and observable) is explicit and stable under appropriate perturbations.

J.-C. Yoccoz  (see~\cite[Appendice 1]{Yoccoz}) constructed an example showing that there is {\em no analogue} of the Denjoy-Koksma inequality for the two dimensional torus translation. 
More precisely, he showed that given any (arbitrarily slow) rate of convergence to zero $\{\rho_N\}_{N\ge1}$, there are an {\em analytic} observable $\obs \colon \T^2 \to \R$ and a rationally independent frequency $\omb \in\T^2$ such that for almost every $\x \in \T^2$,  and for $N$ large enough,
$$ \abs{\frac{1}{N} \, \bsum{N} (\x) - \int_{\T^2} \obs} \ge \rho_N \, .$$

However, the frequency vector $\omb=(\om_1, \om_2)$ was constructed in such a way that its components $\om_1$ and $\om_2$ are (highly) Liouville. The question is then what happens for {\em typical} (in a measure theoretical sense) frequency vectors.  

Again, as in the one-dimensional case, by means of Fourier analysis arguments, the co-homological equation has a solution provided that the observable be regular enough\footnote{In the $d$-dimensional setting, more regularity is needed, namely at least that $\obs \in C^{d+\alpha} (\T^d)$.}  and the frequency satisfy a Diophantine condition. 
Our goal is then to study the low regularity setting.

\begin{definition}\label{DC intro} For a multi-index $\kb = (k_1, \ldots, k_d) \in \Z^d$, let $\abs{\kb} := \max_{1\le j \le d} \, \abs{k_j}$, and if $\x = (x_1, \ldots, x_d) \in \T^d$, let
$\kb \cdot \x := k_1 x_1 + \ldots + k_d x_d$. 

We say that $\omb \in \T^d$ satisfies a Diophantine condition if there exist $\ga>0$ and $A>d$ such that 
\begin{equation}\label{DC}
\norm {\kb \cdot \omb} = \dist \left( \kb \cdot \omb, \,  \Z \right) \ge \frac{\ga}{\abs{\kb}^A}
\end{equation}
for all  $\kb \in \Z^d$ with  $\abs{\kb} \neq 0$.
Denote by $\DC_{\ga, A}$ the set of all frequency vectors $\omb$ satisfying the Diophantine condition~\eqref{DC}. Given any $A>d$, the set $\bigcup_{\ga>0} \DC_{\ga, A}$  has full measure. 
\end{definition}

Next we formulate our results on the rate of convergence of the Birkhoff means of H\"older\footnote{It will be evident from their proofs that similar kinds of estimates can be derived for observa\-bles with any modulus of continuity; however, for simplicity, we will not formulate them.} observables over a higher dimensional  toral translation and over an affine skew product.

\begin{theorem}\label{thm d translation intro}
Let $\obs \in C^\alpha (\T^d)$, let $\omb \in \DC_{\ga, A}$ and  let $T_{\omb} \colon \T^d \to \T^d$ be the corresponding torus translation. Then for all $N \ge 1$ we have
$$\norm{\frac{1}{N} \, \bsum{N}  - \int_{\T^d} \obs}_\infty \le \frac{\const}{\ga} \, \norm{\obs}_\alpha \, \frac{1}{N^\beta} \, $$
where $\beta = \frac{\alpha}{A+d}$ .
 \end{theorem}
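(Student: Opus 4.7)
The plan is to adapt the Fourier-analytic strategy outlined for Theorem~\ref{extension D-K} to the $d$-dimensional torus. The central identity is that for any trigonometric polynomial $P(\x) = \sum_{\kb} \fc{P}(\kb)\, e^{2\pi i \kb \cdot \x}$ of degree at most $K$ we have
\begin{equation*}
\frac{1}{N}\sum_{n=0}^{N-1} P(\x + n\omb) - \int_{\T^d} P \;=\; \sum_{0 < \abs{\kb} \le K} \fc{P}(\kb)\, e^{2\pi i \kb \cdot \x}\, D_N(\kb \cdot \omb),
\end{equation*}
where $D_N(t) := \frac{1}{N}\sum_{n=0}^{N-1} e^{2\pi i n t}$ is a geometric sum satisfying the elementary bound $\abs{D_N(t)} \le \frac{1}{2N\norm{t}}$. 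Combining this with the Diophantine lower bound $\norm{\kb \cdot \omb} \ge \ga/\abs{\kb}^A$ immediately yields $\abs{D_N(\kb \cdot \omb)} \le \abs{\kb}^A/(2\ga N)$ for every nonzero $\kb \in \Z^d$.

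First I would approximate $\obs$ by a trigonometric polynomial $P_K$ of degree at most $K$, obtained by convolving $\obs$ with a $d$-fold tensor product of de la Vall\'ee Poussin kernels. This furnishes \emph{simultaneously} a Jackson-type bound $\norm{\obs - P_K}_\infty \less_d \norm{\obs}_\alpha K^{-\alpha}$ and the coefficient inequality $\abs{\fc{P_K}(\kb)} \le \abs{\fc{\obs}(\kb)}$. Splitting
\begin{equation*}
\tfrac{1}{N}\sum_{n=0}^{N-1} \obs(\x+n\omb) - \int_{\T^d}\obs \;=\; \Bigl(\tfrac{1}{N}\sum_{n=0}^{N-1}(\obs - P_K)(\x+n\omb) - \int(\obs - P_K)\Bigr) \;+\; \Bigl(\tfrac{1}{N}\sum_{n=0}^{N-1} P_K(\x+n\omb) - \int P_K\Bigr),
\end{equation*}
the first bracket is bounded in sup-norm by $2\norm{\obs - P_K}_\infty \less_d K^{-\alpha}\norm{\obs}_\alpha$.

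For the second bracket I would use the standard fact that $\alpha$-H\"older continuity controls Fourier decay on $\T^d$: translating $\obs$ by $\tfrac{1}{2 k_j}$ in the coordinate $j$ realizing $\abs{k_j} = \abs{\kb}$ gives $\abs{\fc{\obs}(\kb)} \less \norm{\obs}_\alpha \abs{\kb}^{-\alpha}$. Together with the $D_N$-bound and with $\#\{\kb \in \Z^d : \abs{\kb}=m\} \asymp m^{d-1}$, the Fourier identity above produces
\begin{equation*}
\Bigl|\tfrac{1}{N}\sum_{n=0}^{N-1}P_K(\x+n\omb) - \int P_K\Bigr| \;\less_d\; \frac{\norm{\obs}_\alpha}{\ga N} \sum_{m=1}^{K} m^{A+d-\alpha-1} \;\less\; \frac{\norm{\obs}_\alpha}{\ga N}\, K^{A+d-\alpha}.
\end{equation*}

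Balancing $K^{-\alpha}$ against $K^{A+d-\alpha}/(\ga N)$ forces the choice $K \asymp (\ga N)^{1/(A+d)}$, producing the stated rate $\beta = \alpha/(A+d)$ with prefactor $\ga^{-\alpha/(A+d)}\norm{\obs}_\alpha \le \ga^{-1}\norm{\obs}_\alpha$ (using $\ga \in (0,1)$ and $\beta \le 1$). The step requiring the most care is the interplay between the $d$-dimensional Jackson-type approximation and the H\"older Fourier-decay estimate; both reduce cleanly to their $1$D counterparts --- the former by tensorization of the approximating kernels, the latter by singling out the max-norm coordinate of $\kb$ --- but it is precisely this second ingredient, replacing the trivial bound $\abs{\fc{P_K}(\kb)} \le \norm{P_K}_\infty$ by $\abs{\fc{\obs}(\kb)} \less \norm{\obs}_\alpha\abs{\kb}^{-\alpha}$, that upgrades the naive exponent $\alpha/(A+d+\alpha)$ to the sharper $\alpha/(A+d)$ appearing in the statement.
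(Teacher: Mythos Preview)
Your proposal is correct and follows essentially the same route as the paper: approximate $\obs$ by a trigonometric polynomial via convolution with a tensorized summability kernel, use the H\"older Fourier-decay bound $\abs{\fc{\obs}(\kb)}\less\norm{\obs}_\alpha\abs{\kb}^{-\alpha}$ together with the geometric-sum estimate and the Diophantine condition to control the low-frequency part, then optimize the truncation parameter. The only cosmetic differences are that the paper convolves with the Jackson kernel rather than the de~la~Vall\'ee~Poussin kernel, and takes $n=N^{1/(A+d)}$ rather than your $\ga$-adjusted choice; neither affects the argument.
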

 
\begin{theorem}\label{skew product thm intro}
Let $\obs \in C^\alpha (\T^d)$, let $\om \in \sDC_\ga$ and let $S_\om \colon \T^d \to \T^d$ be the corresponding affine skew product transformation.
Then for all $N\ge 1$ we have
$$\norm{\frac{1}{N} \bsum{N} - \int_{\T^d} \obs }_\infty \le \const_{d, \ga} \, \norm{\obs}_\alpha \, \frac{1}{N^\beta} \, ,$$
where $\beta = \frac{\alpha \delta}{\delta+d} - \smallo(1)$ and $\delta = \frac{1}{2^{d-1}}$ .
\end{theorem}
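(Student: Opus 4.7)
The plan is to extend the Fourier-analytic strategy from Theorems~\ref{extension D-K} and~\ref{thm d translation intro} to the affine skew product setting. First, I would approximate $\obs$ by a trigonometric polynomial $P_K$ of degree at most $K$ via a Jackson-type construction (e.g.\ convolution with a de la Vall\'ee-Poussin kernel), which delivers
$$\norm{\obs - P_K}_\infty \le C \, \norm{\obs}_\alpha \, K^{-\alpha} \quad \text{and} \quad \abs{\fc{P_K}(\kb)} \le C \, \norm{\obs}_\alpha \, \abs{\kb}^{-\alpha} \text{ for } 0 < \abs{\kb} \le 2K,$$
the second bound stemming from the standard Fourier-coefficient decay of $\alpha$-H\"older functions on $\T^d$ (obtained by the usual half-period translation trick). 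The quantity $\frac{1}{N} \bsum{N} - \int_{\T^d} \obs$ then splits into a low-frequency exponential-sum piece coming from $P_K$ plus a trivially bounded remainder of size $\bigo(\norm{\obs}_\alpha K^{-\alpha})$.

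The heart of the argument is a Weyl-type bound for the exponential sums $\sum_{n=0}^{N-1} e(\kb \cdot S_\om^n \x)$ with $\kb \neq 0$. Iterating the skew product shows that
$$(S_\om^n \x)_j = \sum_{i=0}^{d-j+1} \binom{n}{i} \, \tilde{x}_{j+i}, \quad \text{with } \tilde{x}_{d+1} := \om,$$
so $\kb \cdot S_\om^n \x$ is a polynomial in $n$ of degree $D = d - j^{*} + 1$, where $j^{*} := \min\{j : k_j \neq 0\}$, with leading coefficient $k_{j^{*}} \om / D!$. Applying Weyl's inequality with $D$-fold differencing reduces the sum to a weighted count of near-resonances of $\om$; the Diophantine condition~\eqref{sDC} then yields, after delicate bookkeeping, a bound of the form
$$\frac{1}{N} \left\vert \sum_{n=0}^{N-1} e(\kb \cdot S_\om^n \x) \right\vert \le C_{d,\ga}\, \abs{\kb}^{\delta}\, N^{-\delta + \smallo(1)},$$
with $\delta = 1/2^{d-1}$. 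The extreme case $j^{*} = 1$ (polynomial of full degree $d$) is the binding constraint; smaller $j^{*}$ produce larger Weyl exponents $1/2^{D-1}$ on a lower-dimensional slice of frequencies and can be absorbed into the same estimate.

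Finally, summing the per-frequency estimate over $0 < \abs{\kb} \le 2K$, weighted by $\abs{\fc{P_K}(\kb)} \ll \abs{\kb}^{-\alpha}$ and using $\abs{k_{j^{*}}} \le \abs{\kb}$, gives
$$\sum_{0 < \abs{\kb} \le 2K} \abs{\kb}^{-\alpha}\, \abs{\kb}^{\delta}\, N^{-\delta+\smallo(1)} \less K^{\,d + \delta - \alpha}\, N^{-\delta + \smallo(1)}.$$
Combined with the approximation remainder $K^{-\alpha}$ and optimized via $K \asymp N^{\delta/(\delta + d)}$, the two contributions balance and produce the advertised rate $N^{-\alpha\delta/(\delta+d) + \smallo(1)}$. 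The main technical obstacle I anticipate is the Weyl step itself: controlling the rational approximations to $k_{j^{*}} \om / D!$ carefully enough that the $\abs{\kb}$-dependence is no worse than $\abs{\kb}^{\delta}$ (any extra $\abs{\kb}^{\alpha}$ factor would shift the balance point and spoil the claimed denominator $\delta + d$), while absorbing the dependence on $d$, $D$ and $\ga$ into an admissible prefactor $\const_{d,\ga}$.
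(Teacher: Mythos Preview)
Your proposal is correct and follows essentially the same route as the paper: Jackson-type trigonometric approximation, identification of $\kb\cdot S_\om^n\x$ as a polynomial in $n$ whose degree is governed by the first nonzero entry of $\kb$, Weyl's inequality combined with the Diophantine condition on $\om$ (transferred to $k_{j^*}\om/D!$) to obtain the $\abs{\kb}^\delta N^{-\delta+\smallo(1)}$ bound, and the same optimization $K\asymp N^{\delta/(\delta+d)}$. One slip: where you write ``smaller $j^*$ produce larger Weyl exponents'' you mean \emph{larger} $j^*$ (lower-degree polynomials, better exponents); the paper in fact singles out the degree-$1$ case $j^*=d$ separately and handles it by the geometric-series bound rather than Weyl.
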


\medskip

The rest of the paper is organized as follows. In Section~\ref{dimension one} we present the proof of Theorem~\ref{extension D-K}, the extension of Denjoy-Koksma's inequality, and construct an example showing its (near) optimality, thus proving Theorem~\ref{limitations intro}. Moreover, we discuss various other similar results for more general arithmetic conditions.  
While more technical than the proof of M. Herman, our argument for establishing the Denjoy-Koksma type inequality~\eqref{extension D-K eq}  is modular and versatile; as such, it provides a simple case study for the proofs of Theorem~\ref{thm d translation intro} and Theorem~\ref{skew product thm intro}, which are presented in the subsequent two sections and follow the same scheme.

\section{A case study: the one dimensional torus translation} \label{dimension one}
We obtain\footnote{Our method provides a slightly stronger  bound than the approach used to prove Proposition 4.2.8 in~\cite{Queffelec}.} a different proof of the extension~\eqref{extension D-K eq} of Denjoy-Koksma's inequality. Our approach can be described as  a quantitative version of the proof of Weyl's equidistribution theorem.


\subsection*{Some arithmetic considerations}

We begin with a review of some basic arithmetic properties that will be used in the sequel. The interested reader will find all necessary details in~ \cite[Chapter 3]{Queffelec} or in~\cite{Lang}.

It is easy to verify that for a real number $t$ and a positive integer $n$,
\begin{equation}\label{norm nt}
\abs{e^{2 \pi i t} - 1} \asymp \norm{t}, \quad \text{so} \quad \norm{n t} \less n \norm{t} .
\end{equation}

Given $\omega \in (0,1) \setminus \Q$, consider its continued fraction expansion 
$$\omega = [a_1, a_2, \ldots, a_n, \ldots ]$$
and for each $n\ge1$, the corresponding $n$-th (principal) convergent
$$\frac{p_n}{q_n} = [a_1, \ldots, a_n] \, ,$$
where the integers $p_n = p_n (\omega)$, $q_n = q_n (\omega)$ are relatively prime.

Setting $p_0=0$ and $q_0=1$, the convergents satisfy the recurrence relations 
$p_{n+1} = a_{n+1} p_n + p_{n-1}$ and $q_{n+1} = a_{n+1} q_n + q_{n-1}$ for all $ n\ge1$.

Thus the sequence $\{q_n\}_{n\ge 1}$ of denominators of the convergents of $\omega$ is strictly increasing, $\frac{q_{n+1}}{q_n} \asymp a_n$ and $q_n \ge 2^{(n-1)/2}$ for all $n\ge1$. 

Moreover,
$q_{n+2} = a_{n+1} q_{n+1} + q_n \ge 2 q_n$, so $q_{n+2 k} \ge 2^k q_{n}$ for all integers $n, k$.

Furthermore, $\norm{q_n \om} = \abs{q_n \om - p_n}$ and the following inequalities hold: 
\begin{equation}\label{q_n-ineq1}
\frac{1}{2q_{n+1}} < \abs{q_n\omega-p_n} < \frac{1}{q_{n+1}} \, .
\end{equation}

We call a {\em best approximation} to $\om$ any (reduced) fraction $\frac{p}{q}$ such that $\normT{q \om} = \abs{q \om - p}$ and
$$\normT{j \om} > \normT{q \om} \quad \text{ for all } \quad 1 \le j < q \, .$$

It turns out that the best approximations to $\om$ are precisely its principal convergents. In fact,  $q_{n+1}$ is the smallest integer $j > q_{n}$ such that $\normT{j \om} < \normT{q_{n} \om}$ (see~\cite[Chapter 1, Theorem 6]{Lang}). 
Noting also that $\norm{-t}=\norm{t}$, we conclude the following.

Given $\omega\in (0, 1) \setminus \Q$, if $\frac{p}{q}$ is a best approximation to $\om$ then 
\begin{equation}\label{ineq2}
\normT{j \om}  >  \frac{1}{2 q} \quad \text{ for all } \quad 1 \le \abs{j} < q \, .
\end{equation}

The Diophantine condition~\eqref{sDC} on $\om \in \T$ (or any other such arithmetic condition) can be described in terms of the relative sizes of the gaps between the denominators $q_n$. 

More precisely, if $\om \in \sDC_\ga$, since by~\eqref{q_n-ineq1} we have
$\normT{q_n \om}  = \abs{q_n\omega-p_n} < \frac{1}{q_{n+1}}$, it follows that
\begin{equation}\label{lacunarity q_n}
q_{n+1} \le \frac{1}{\ga} \, q_n \log^2 (q_n + 1)  \quad \text{ for all } n \ge 1 .
\end{equation}

Thus if $N\ge 2$, choosing $s$ to be the largest integer such that $q_s \le N$, we have that
$$q_s \le N  < q_{s+1} \le \frac{1}{\ga} \, q_s \log^2 (q_s+1)   \le \frac{1}{\ga} \, N \log^2 (N+1) \, .$$

We then conclude the following: given $\ga>0$ and  $\om\in\sDC_\ga$, for all $N\ge2$ there is a principal convergent $\frac{p}{q}$ of $\om$ such that
\begin{equation}\label{arithmetic star}
N \le q \le \frac{1}{\ga} \, N \log^2 (N+1) \, .
\end{equation}  

Similarly, given $\ga>0$, $A>1$ and $\om \in \sDC_{\ga, A}$ (that is, $\om \in \T$ satisfies the slightly more general Diophantine condition~\eqref{DC}, $d=1$), the lacunarity of the corresponding sequence $\{q_n\}$ has the bound
\begin{equation}\label{lacunarity DC}
q_{n+1} \le \frac{1}{\ga} \, q_n^A   \quad \text{ for all } n \ge 1 .
\end{equation}

Finally, let us define a more general arithmetic condition, one that includes some Liouville numbers. Given $\om \in (0, 1) \setminus \Q$ and $\{q_n\}$ the denominators of its convergents, let
\begin{equation*}
\beta (\om) := \limsup_{n \to \infty} \frac{\log q_{n+1}}{q_n} \, .
\end{equation*}

Note that for frequencies satisfying a Diophantine condition $\om\in\sDC_{\ga, A}$ we have $\beta (\om) = 0$. 

We will consider frequencies $\om$ with $\beta (\om) < \infty$. If $\beta (\om) < \beta  <\infty$, then for some $C = C(\om) < \infty$ we have
\begin{equation}\label{lacunarity beta}
q_{n+1} \le C \, e^{\beta q_n} \, \text{ for all } n \ge 1 \, .
\end{equation}


\subsection*{Effective approximation by trigonometric polynomials} We review some Fourier analysis notions used in our proof of~\eqref{extension D-K eq} (see~\cite{Stein-Princeton-lectures} or~\cite{Katznelson} for more details).

Given a {\em continuous} observable $\obs \colon \T \to \R$ and $k \in \Z$, let 
\begin{equation*}
\fc{\obs}(k) = \int_0^1 \phi (x) e  (- k x) dx
\end{equation*}
be its $k$-th Fourier coefficient, where we use Vinogradov's notation $e (x) := e^{2 \pi i x}$.

It is known that if $\obs\in\C^\alpha(\T)$, then the Fourier coefficients of $\obs$ have the decay 
\begin{equation}\label{FC decay dim 1}
\abs{\fc{\obs} (k)} \le \const \, \norm{\obs}_\alpha \, \frac{1}{\abs{k}^\alpha} \quad \text{for all } \ k\neq 0 .
\end{equation}

More generally, given a modulus of continuity $\moc$, if $\obs\in C^\moc (\T)$, then
\begin{equation*}
\abs{\fc{\obs} (k)} \le \const \, \norm{\obs}_\moc \, \moc \left( \frac{1}{\abs{k}}\right) \quad \text{for all } \ k\neq 0 .
\end{equation*}

These estimates will play a crucial r\^ole in our proof of the Denjoy-Koksma type inequality.

Consider the Fourier series associated with $\obs$
$$\obs (x)  \sim  \sum_{k= - \infty}^{\infty} \, \fc{\obs} (k) \, e (k x) = \int_\T \obs + \sum_{k\neq0}  \, \fc{\obs} (k) \, e (k x) \, .$$

For every $n\ge 0$, denote by
$$S_n \obs (x) := \sum_{\abs{k}\le n}  \, \fc{\obs} (k) \, e (k x) = (\obs \ast D_n) (x)\, $$
the $n$-th partial sum\footnote{For a given observable $\obs$ and an integer $n$, we consider its $n$-th Birkhoff sum $\bsum{n}$ (over some transformation $T$) and the $n$-th partial sum $S_n \obs$ of its Fourier series; we also define a trigonometric polynomial $\obs_n$ that approximates $\obs$. The reader should mind the difference between all of these notations.} of the Fourier series of $\obs$, where
$$D_n (x) = \sum_{\abs{k}\le n} e (k x)$$
 is the $n$-th Dirichlet kernel.

The partial sums $S_n \obs (x)$ do {\em not}, in general, converge uniformly (or even pointwise, for all points) to $\obs (x)$. 
However, by Fej\'{e}r's theorem, their Ces\`{a}ro averages do converge uniformly, and at a rate that depends explicitly on the modulus of continuity of the function $\obs$.  
More precisely, let
$$ \frac{S_0 \obs (x) + \ldots + S_{n-1} \obs (x)}{n}  = (\obs \ast F_n) (x) \, $$
be the Ces\`{a}ro means of the Fourier series of $\obs$, 
where 
$$F_n (x) = \frac{D_0 (x) + \ldots + D_{n-1} (x)}{n} = \sum_{\abs{k}\le n}  \left( 1 - \frac{\abs{k}}{n} \right) \, e (k x) =  \frac{1}{n} \, \frac{\sin^2 (n x / 2)}{\sin^2 (x / 2)}$$ 
is the $n$-th Fej\'{e}r kernel.  

If $0<\alpha\le 1$ and $\obs \in C^\alpha (\T)$, then for all $n\ge1$
$$\norm{\obs - \obs \ast F_n}_\infty   \le \const \, \norm{\obs}_\alpha \,  \frac{\log n}{n^\alpha} \, .$$

This is a quantitative version of the Weierstrass approximation theorem. Since $F_n (x)$ is a trigonometric polynomial of degree $\le n$, so is $\obs \ast F_n$. Thus $\alpha$-H\"{o}lder continuous functions can be approximated by trigonometric polynomials of degree $\le n$ with error bound of order  $\frac{\log n}{n^\alpha}$.

This approximation is not optimal. The convolution with the $n$-th Jackson kernel (instead of the Fej\'{e}r kernel) provides an error bound of order $\frac{1}{n^\alpha}$, which is optimal (this result  is called Jackson's theorem, see~\cite{Katznelson} or~\cite{Shadrin}).\footnote{However, using the (better known) Fej\'{e}r summability kernel here would still provide good (albeit slightly weaker) estimates.}  Let us introduce this summability kernel (see~\cite{Shadrin} for more details).

For all $n\in\N$, let $J_n \colon \T \to \R$, $$J_n (x):= c_n \, F_m^2 (x) \, $$ where $m=\floor{\frac{n}{2}}$, $F_m(x)$ is the Fej\'er kernel and $c_n \asymp \frac{1}{n}$ is a normalizing factor chosen so that $\int_\T J_n = 1$. Therefore, $J_n(x)$ is a trigonometric polynomial of degree $\le n$ and uniformly in $n$ and $k$, its (Fourier) coefficients have the bound $\abs{\fc{J_n}(k)} \less 1$. 

Moreover, $\{J_n\}_{n\ge1}$ is a summability kernel and in fact, given $\obs\in C^\alpha(\T)$ with $\alpha \in (0, 1]$, if we denote
$$\obs_n (x) := (\obs \ast J_n) (x) \, ,$$
then $\obs_n$ is a trigonometric polynomial of degree $\le n$ and 
\begin{equation}\label{Jackson Holder 1d}
\norm{\obs - \obs_n}_\infty \le \const \, \norm{\obs}_\alpha \,  \frac{1}{n^\alpha} \, .
\end{equation}

A similar result holds for continuous functions on $\T$ with any (sub-additive) modulus of continuity $\moc$. If $\obs \in C^\moc (\T)$ then
\begin{equation*}\label{Jackson type d1}
\norm{\obs - \obs_n}_\infty \le \const \, \norm{\obs}_{\moc} \,  \moc \left( \frac{1}{n} \right) \, .
\end{equation*}

\subsection*{Birkhoff averages of Fourier modes}
Consider the Fourier series associated with the continuous observable $\obs$:
$$\obs (x)  \sim  \sum_{k= - \infty}^{\infty} \, \fc{\obs} (k) \, e (k x) = \int_\T \obs + \sum_{k\neq0}  \, \fc{\obs} (k) \, e (k x) \, .$$

Then for all $j \in \Z$,
$$\obs (x + j \om) -  \int_\T \obs \sim \sum_{k\neq0}  \, \fc{\obs} (k) \, e (k (x +j \om))  =  \sum_{k\neq0}  \, \fc{\obs} (k) \, e (j k \om) \, e (k x) \, .$$ 

It follows that the Fourier series of the $N$-th Birkhoff average of $\phi$ is
\begin{align}\label{Birkhoff sum kernel}
\frac{1}{N} \bsum{N} (x)  - \int_\T \obs \sim 
& \sum_{k\neq0}  \, \fc{\obs} (k) \, \left(  \frac{1}{N} \sum_{j=0}^{N-1}  e (j k \om)  \right)  \, e (k x) \, \notag \\
=  &\sum_{k\neq0}  \, \fc{\obs} (k) \, \esum_N (k \om)  \, e (k x) \, , 
\end{align}
where $\esum_N$ refers to the (averaged) exponential sum
$$\esum_N (t):= \frac{1}{N}\sum_{j=0}^{N-1}  e (j t) .$$ 
 
Clearly $\abs{ \esum_N (t) } \le 1$. Moreover, since $\esum_N (t)$ is the sum of a finite geometric sequence, we have 
\begin{equation}\label{exp sum 1}
\abs{\esum_N (t)}  
  = \frac{1}{N} \abs{ \frac{1- e (N t)}{1-e(t)} }
 \less \frac{1}{ N \normT{t}} \, .
\end{equation}

\begin{lemma} \label{estimate kernel}
Let $\frac{p}{q}$ be a best approximation to the irrational number $\om$. 
Then 
\begin{equation} \label{sum kernel bound }
\sum_{1 \le \abs{k} < q} \abs{\esum_N(k\omega)}  \le \const \, q \,  \frac{\log q}{N} \, .
\end{equation}
\end{lemma}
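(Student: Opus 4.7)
The plan is to first apply the geometric-sum bound~\eqref{exp sum 1}, which gives $\abs{\esum_N(k\om)} \less \frac{1}{N \normT{k\om}}$, reducing the inequality to the purely arithmetic estimate
\[
\sum_{1 \le \abs{k} < q} \frac{1}{\normT{k\om}} \less q \log q \, .
\]
Since $\normT{-k\om} = \normT{k\om}$, by symmetry it suffices to prove this over $1 \le k \le q-1$.

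The key observation is that the orbit $\{k\om : 0 \le k \le q-1\} \subset \T$ consists of $q$ points that are pairwise $\frac{1}{2q}$-separated on the circle. Indeed, for any $0 \le k < k' \le q-1$ one has $1 \le k'-k < q$, so the best-approximation inequality~\eqref{ineq2} gives $\normT{(k'-k)\om} > \frac{1}{2q}$, which is precisely the circular distance between $k\om$ and $k'\om$.

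Given this separation, I extract the arithmetic estimate by a packing argument. Let $0 < r_1 \le \ldots \le r_{q-1} \le \frac{1}{2}$ be the increasing rearrangement of the values $\normT{k\om}$ for $1 \le k \le q-1$. The symmetric arc $[-r_j, r_j] \subset \T$ contains $0$ together with the $j$ orbit points nearest to $0$, hence at least $j+1$ points of the orbit, and these are pairwise $\frac{1}{2q}$-separated; therefore $2 r_j > \frac{j}{2q}$, i.e.\ $r_j \gtrsim \frac{j}{q}$. Summing,
\[
\sum_{k=1}^{q-1} \frac{1}{\normT{k\om}} = \sum_{j=1}^{q-1} \frac{1}{r_j} \less q \sum_{j=1}^{q-1} \frac{1}{j} \less q \log q \, .
\]

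The main (really only) obstacle is the $\frac{1}{2q}$-separation step; once~\eqref{ineq2} is brought to bear, the packing and the ensuing harmonic-sum estimate are entirely elementary. The logarithmic factor enters precisely through the harmonic sum and appears intrinsic to the method, consistent with the logarithmic loss eventually seen in~\eqref{extension D-K eq}.
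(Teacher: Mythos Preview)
Your proof is correct and follows essentially the same approach as the paper: reduce via~\eqref{exp sum 1} to bounding $\sum_{k=1}^{q-1} \frac{1}{\normT{k\om}}$, use~\eqref{ineq2} to get the $\frac{1}{2q}$-separation of the orbit points, and extract a harmonic sum $\sum_{j=1}^{q-1} \frac{q}{j} \less q\log q$. The only cosmetic difference is that the paper partitions $\T$ into fixed arcs $C_j = [\tfrac{j}{2q},\tfrac{j+1}{2q})$ and uses pigeonhole (each arc carries at most one orbit point), whereas you sort the values $r_j = \normT{k\om}$ and run a packing argument to get $r_j \more \frac{j}{q}$; both are standard, equivalent ways to convert the separation into the bound $\normT{k\om} \more \frac{j}{q}$ for the $j$-th closest orbit point.
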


\begin{proof} Since $\norm{t}=\norm{-t}$ for all $t\in\R$, it is enough to bound $\sum_{1 \le k < q} \abs{\esum_N(k\omega)}$.

Recall that by~\eqref{ineq2}, for all $1\leq \abs{j} <q$,
$$
\normT{ j\om} >\frac{1}{2q} \, .
$$

Thus for every $k, k' \in \{1, \ldots, q-1\}$ with $k\neq k'$ we have
$$
\normT{ k\omega-k'\omega} = \normT{ (k-k')\omega }  > \frac{1}{2q} \, .
$$

Divide $\T$ into the $2q$ arcs $C_j=[\frac{j}{2q}, \frac{j+1}{2q})$, $0\leq j\leq 2q-1$, with equal length $\abs{C_j}=\frac{1}{2q}$. By the bound above,  each arc $C_j$ contains at most one point $k\omega \mod 1$ with $k\in  \{1, \ldots, q-1\}$, and by~\eqref{ineq2}, the arcs $C_0 = [0, \frac{1}{2 q})$ and $C_{2q-1} = [ \frac{2q-1}{2q}, 1)$ do {\em not} contain any such point.

Note that if $x \in C_j$ with $1\leq j\leq q-1$, then $\norm{x}\geq \frac{j}{2q}$ and similarly, if  $x\in C_{2q-j-1}$ with $1\leq j\leq q-1$ then $\norm{x}\geq \frac{j}{2q}$. Using~\eqref{exp sum 1}, it follows that 
\begin{align*}
  \sum_{1 \le k < q} \abs{ \esum_N(k\omega)} \le   \sum_{1 \le k < q} \frac{1}{N \norm{k \om}} 
   \le \frac{1}{N} \sum_{j=1}^{q-1} \frac{4 q}{j } 
\le  \const \,  \frac{q}{N} \log q \, ,
\end{align*}
which proves the lemma.
\end{proof}

%


\begin{proof}[Proof of Theorem~\ref{extension D-K}]
Let $\obs \in C^\alpha (\T)$, fix $N\ge 1$ (the length of the Birkhoff sum) and let $1\le n \le N$ (the degree of polynomial approximation) to be chosen later. 
Write 
$$\obs = \obs_n + \left( \obs -\obs_n \right)  =:  \obs_n + \psi_n \,, $$
which implies that
\begin{align}\label{case study eq0}
\frac{1}{N} \, \bsum{N}  - \int_\T \obs & 
= \left(\frac{1}{N} \, \bsumap{N}   - \int_\T\obs_n \right) + \left(\frac{1}{N} \, \psi_n^{(N)} - \int_\T \psi_n\right) .
\end{align}

From ~\eqref{Jackson Holder 1d} we clearly have
\begin{equation}\label{1d tail est}
\norm{ \frac{1}{N} \, \psi_n^{(N)} - \int_\T \psi_n }_\infty \le \const \, \norm{\obs}_\alpha \,  \frac{1}{n^\alpha} \, .
\end{equation}

The proof is then reduced to estimating the $N$-th Birkhoff average of the trigonometric polynomial (of degree $\le n$) $\obs_n = \obs \ast J_n$. 
By~\eqref{Birkhoff sum kernel} applied to $\obs_n$, for all $x\in\T$ we have:
\begin{align*}
\frac{1}{N} \, \bsumap{N} (x)  - \int_\T\obs_n = \sum_{1\le\abs{k}\le n} \fc{\obs_n} (k) \, \esum_N (k \om) \, e (k x) \, .
\end{align*}

For all $k\neq 0$, the corresponding Fourier coefficient of $\obs_n = J_n \ast \obs $ satisfies
\begin{equation*}
\abs{ \fc{\obs_n} (k)} = \abs{\fc{J_n}(k) \cdot \fc{\obs} (k)} \less  \abs{\fc{\obs} (k)} \less \norm{\obs}_\alpha \, \frac{1}{\abs{k}^\alpha} \, . 
\end{equation*}

Since the sequence $\{q_j\}_{j\ge1}$ of the denominators of the principal convergents of $\om$ is strictly increasing, there is an integer $s$ such that $q_s \le n < q_{s+1}$. 

Thus combining with the preceding, for all $x\in\T$ we have:
\begin{align}\label{main bound D-K}
\abs{ \frac{1}{N} \, \bsumap{N} (x)  - \int_\T\obs_n  } & \le \sum_{1\le\abs{k}\le n} \abs{\fc{\obs_n} (k)} \, \abs{\esum_N (k \om)}    \le  \sum_{1\le\abs{k} < q_{s+1}} \abs{\fc{\obs_n} (k)} \, \abs{\esum_N (k \om)} \notag\\
& \less \norm{\obs}_\alpha \,  \sum_{1\le\abs{k} < q_{s+1}} \frac{1}{\abs{k}^\alpha} \, \abs{\esum_N (k \om)} \notag\\
&  =  \norm{\obs}_\alpha \,   \sum_{j=1}^s \, \sum_{q_j \le \abs{k} < q_{j+1}} \frac{1}{\abs{k}^\alpha} \, \abs{\esum_N (k \om)}  \notag\\
& \le  \norm{\obs}_\alpha \,  \sum_{j=1}^s \frac{1}{q_j^\alpha}  \sum_{1 \le \abs{k} < q_{j+1}}  \abs{\esum_N (k \om)}  \notag\\
& \less    \norm{\obs}_\alpha \, \frac{1}{N} \, \left(   \sum_{j=1}^s \frac{q_{j+1}  \log q_{j+1}}{q_j^\alpha} \right) 
\end{align}
where the last estimate follows from Lemma~\ref{estimate kernel}.

If $\om\in\sDC_\ga$, then using~\eqref{lacunarity q_n}, for all $1\le j \le s$ we have:
\begin{align*} 
\frac{q_{j+1} \log(q_{j+1})}{q_j^\alpha} 
            & \less \frac{\left( \frac{1}{\ga} \,  q_j \log^2 (q_j+1) \right) \, \left( \log \frac{1}{\ga} \, + \log (q_j+1) \right) }{q_j^\alpha} \\
            & \less \left(  \frac{1}{\ga} \, \log \frac{1}{\ga} \right) \, q_j^{1-\alpha} \, \log^3 (q_j+1)  
            \less \left(  \frac{1}{\ga} \, \log \frac{1}{\ga} \right) \, q_j^{1-\alpha} \, \log^3 n \, .
\end{align*}

As mentioned earlier, $q_n \le 2^{- k} q_{n+2k}$ for all $n, k \in \N$, hence
\begin{align*}
\sum_{j=1}^s q_j^{1-\alpha} 
= \sum_{\substack{1\le j\le s\\j \text{ odd }}} q_j^{1-\alpha}  + \sum_{\substack{1\le j\le s\\j \text{ even }}} q_j^{1-\alpha}
 \less q_s^{1-\alpha}  \le n^{1-\alpha} \, . 
\end{align*}

We obtained the following: if $\om \in \sDC_\ga$ and $q_s \le n < q_{s+1}$, then
\begin{equation}\label{sum q's}
 \sum_{j=1}^s \frac{q_{j+1}  \log q_{j+1}}{q_j^\alpha}  \less  \left(  \frac{1}{\ga} \, \log \frac{1}{\ga} \right) \, n^{1-\alpha} \, \log^3 n \, . 
\end{equation}

Combining~\eqref{case study eq0},~\eqref{1d tail est}, ~\eqref{main bound D-K},
and~\eqref{sum q's} we obtain
\begin{align*}
 \norm{ \frac{1}{N} \, \bsum{N}  - \int_\T \obs}_\infty \less \left(\frac{1}{\ga} \log  \frac{1}{\ga}\right) \, \norm{\obs}_\alpha \, 
 \left(  \frac{n^{1-\alpha} \, \log^3 n}{N} + \frac{1}{n^\alpha} \right) \, .
 \end{align*}

The conclusion~\eqref{extension D-K eq} then follows by choosing the degree of polynomial approxi\-mation  $n= \left \lfloor{   \frac{N}{\log^3 N } } \right \rfloor$. 
\end{proof}

 \begin{remark} 
 Similar types of statements hold if instead of the Diophantine condition~\eqref{sDC} we consider more general arithmetic conditions on $\om$. The only part of the argument where such a condition is needed is in the derivation of the bound~\eqref{sum q's}.

\smallskip

$\blob$ If $\om \in \sDC_{\ga, A}$, 
then using~\eqref{lacunarity DC}, the  analogue of the bound~\eqref{sum q's} is:
 $$ \sum_{j=1}^s \frac{q_{j+1}  \log q_{j+1}}{q_j^\alpha}  \le \const_{\ga, A}  \, n^{A-\alpha} \, \log n \, ,$$
 so the rate of convergence of the corresponding Birkhoff means has the bound:
 \begin{equation}\label{bound DC eq}
  \norm{ \frac{1}{N} \, \bsum{N}  - \int_\T \obs}_\infty \le \const_{\ga, A} \, \norm{\obs}_\alpha \,  \frac{\log^{\alpha/A} N}{N^{\alpha/A}} \, .
  \end{equation}

$\blob$ If $\beta (\om) < \beta < \infty$, then using~\eqref{lacunarity beta}, the  analogue of the bound~\eqref{sum q's} is:
$$ \sum_{j=1}^s \frac{q_{j+1}  \log q_{j+1}}{q_j^\alpha}  \le \const_{\om}  \, e^{\beta n} \, n^{1-\alpha} \,  ,$$
so the rate of convergence of the corresponding Birkhoff means has the bound:
 \begin{equation}\label{bound beta eq}
 \norm{ \frac{1}{N} \, \bsum{N}  - \int_\T \obs}_\infty \le \const_{\om} \, \norm{\obs}_\alpha \,  \frac{1}{\log^\alpha N} \, .
 \end{equation}
\end{remark}

\begin{remark}
The case of an observable $\obs$ with an arbitrary modulus of continuity $\moc$ is treated in the same way. The main ingredients of the proof as regards $\obs$ are the order of trigonometric polynomial approximation and the decay of its Fourier coefficients. Recall that if $\obs\in C^\moc(\T)$, then
$\norm{\psi_n}_\infty = \norm{\obs - \obs_n}_\infty \less \norm{\obs}_\moc \, \moc (\frac{1}{n})$ and $| \fc{\obs} (k) |\less \norm{\obs}_\moc \moc (\frac{1}{n})$. Then the estimate~\eqref{main bound D-K} becomes:
$$\abs{ \frac{1}{N} \, \bsumap{N} (x)  - \int_\T\obs_n  } 
 \less   \norm{\obs}_\moc \, \frac{1}{N} \,  \left(   \sum_{j=1}^s q_{j+1} \, \log q_{j+1} \, \moc \Big( \frac{1}{q_j} \Big)  \right) \, .
 $$

 As before, if say $\om \in \sDC_\ga$, then
 $$q_{j+1} \, \log q_{j+1}  \less  \left(\frac{1}{\ga} \log  \frac{1}{\ga}\right) \,  q_j \log^3 (q_j+1)  \less  \left(\frac{1}{\ga} \log  \frac{1}{\ga}\right) \, q_j \, \log^3 n \,.$$
 
Since $\moc$ is sub-additive and increasing, if $q\le n$ then
 $$q \,  \moc  \left(\frac{1}{q} \right) = q \, \moc   \left( \frac{n}{q} \, \frac{1}{n} \right) \le q \,   \left( \frac{n}{q} +1 \right) \,  \moc  \left(\frac{1}{n} \right) 
 \le 2 n \,  \moc  \left(\frac{1}{n} \right)  \, .
 $$
 
 Clearly $s \less \log n$ (since $\{q_j\}$ grows exponentially and $q_s \le n$). Then
 \begin{align*}
  \sum_{j=1}^s q_{j+1} \, \log q_{j+1} \, \moc \Big( \frac{1}{q_j} \Big)  & \less  \left(\frac{1}{\ga} \log  \frac{1}{\ga}\right) \, \log^3 n \, \sum_{j=0}^s  q_{j} \, \moc \Big( \frac{1}{q_j} \Big) \\
& \less  \left(\frac{1}{\ga} \log  \frac{1}{\ga}\right) \,  \left(\log^4 n\right)  \, n \,  \moc  \left(\frac{1}{n} \right)  \, .
 \end{align*}
 
 Putting it all together and choosing $n\asymp \frac{N}{\log^4 N}$ we conclude:
 \begin{align*}
\norm{ \frac{1}{N} \,  \bsum{N} - \int_\T \obs }_\infty & \less \left(\frac{1}{\ga} \log  \frac{1}{\ga}\right) \norm{\obs}_\moc \, \left( n \,  \moc  \Big(\frac{1}{n} \Big) \,  \frac{\log^4 n}{N} +  \moc \Big(\frac{1}{n}\Big)  \right) \\
& \less \left(\frac{1}{\ga} \log  \frac{1}{\ga}\right) \norm{\obs}_\moc \,  \moc \left( \frac{\log^4 N}{N} \right) \, .
\end{align*}
\end{remark}


\subsection*{Sharp lower bound on the rate of convergence} 
For Lebesgue a.e. frequency $\om$, we construct an example of an observable
$\obs$ showing that the Denjoy-Koksma inequality is optimal; in particular this also shows that for Diophantine frequencies, the rate of convergence $\{\rho_N\}$ of the corresponding Birkhoff averages obtained in Theorem~\ref{extension D-K} is nearly optimal (save for some logarithmic factors). Furthermore, considering frequencies with increasingly stronger Liouvillean behavior, the rate of convergence $\{\rho_N\}$ is shown to deteriorate accordingly. 

Our construction is similar in spirit to that of the Weierstrass function, a classical example of a H\"older continuous but nowhere differentiable function. 

We begin with a technical result.

\begin{proposition}\label{sharp prop} 
Let $\omega\notin \Q$ and let $\{q_k\}_{k\geq 1}$ be the  denominators of its convergents. Given $\alpha\in (0, 1]$, consider the lacunary Fourier series
$$f (x) := \sum_{k=1}^\infty \frac{1}{q_k^\alpha} \, e (q_k x) \, $$
and define the observable $\obs\colon \T \to \R$ by $ \obs (x) := \Re f (x)$.
Then the following hold.
\begin{enumerate}[(i)]
\item $\obs$ is $\alpha$-H\"older continuous.
\item Let $C < \infty$ and $c>0$ be absolute constants, with $C$ sufficiently large and $c$ sufficiently small. If for a given integer $m$ we have 
$\displaystyle q_{m+1} \ge C m \, q_m$,
then the corresponding $q_m$-th Birkhoff sum of $\obs$ over $T_\om$ satisfies
\begin{equation*}
\frac{1}{q_m} \obs^{(q_m)} ( l q_m \om) - \int_\T \obs  \ge \const \, \frac{1}{q_m^\alpha} 
\end{equation*}
for all integers $0 \le l \le c \,  \frac{q_{m+1}}{q_m} $ .
\item Furthermore, in the setting of item (ii), there is an integer $N_m \asymp q_{m+1}$ such that
\begin{equation*}
\frac{1}{N_m} \obs^{(N_m)} ( 0 ) - \int_\T \obs  \ge \const \, \frac{1}{q_m^\alpha} \, .
\end{equation*}
\end{enumerate}
\end{proposition}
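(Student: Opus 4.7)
The plan is to treat the three claims in sequence; (i) and (iii) will be essentially soft consequences, while (ii) carries all the analytic work. For (i), the $\alpha$-Hölder regularity of $\obs$ follows from the standard Weierstrass-type estimate for lacunary series: writing $h=\abs{x-y}$, split the sum at $q_k\sim 1/h$, bound the low-frequency terms by $\abs{\cos(2\pi q_k x)-\cos(2\pi q_k y)}\le 2\pi q_k h$ and the high-frequency terms trivially by $2$. The lacunarity $q_{k+2}\ge 2q_k$ makes each resulting geometric series dominated by its boundary term, so each contributes $\bigo(h^\alpha)$.

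For (ii), I would use that $\int_\T\obs=0$ and expand the Birkhoff sum in Fourier modes as in \eqref{Birkhoff sum kernel}:
\[
\frac{1}{q_m}\bsum{q_m}(l q_m \om) \;=\; \sum_{k\ge 1}\frac{1}{q_k^\alpha}\Re\bigl(e(l q_k q_m \om)\,\esum_{q_m}(q_k\om)\bigr).
\]
Write $q_m\om=p_m+\eta_m$ with $\abs{\eta_m}\in(\tfrac{1}{2q_{m+1}},\tfrac{1}{q_{m+1}})$. The diagonal term $k=m$ collapses, via $e(l q_m p_m)=1$, to
\[
\frac{1}{q_m^{\alpha+1}}\sum_{j=0}^{q_m-1}\cos\bigl(2\pi(l q_m + j)\eta_m\bigr).
\]
Each argument satisfies $\abs{(l q_m+j)\eta_m}\le c+1/(Cm)\le 2c$, so taking $c$ small forces every cosine to exceed $1/2$, giving a main contribution $\ge \tfrac{1}{2q_m^\alpha}$.

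The delicate part is controlling the tails $k\neq m$. The $k>m$ tail is easy: $\abs{\esum_{q_m}(q_k\om)}\le 1$ and lacunarity yield $\sum_{k>m}q_k^{-\alpha}\le C_\alpha q_{m+1}^{-\alpha}\le C_\alpha(Cm)^{-\alpha}q_m^{-\alpha}$, which is small for $C$ large. The $k<m$ tail is the core obstacle: the naive bound $\abs{\esum_{q_m}(q_k\om)}\less q_{k+1}/q_m$ coming from~\eqref{exp sum 1} and~\eqref{ineq2} is only enough to give an $O(1)$ sum, far too weak. The essential trick is to exploit
\[
\esum_{q_m}(q_k\om) \;=\; \frac{1}{q_m}\cdot\frac{e(q_m q_k \om)-1}{e(q_k\om)-1}\,,
\]
together with $q_m q_k\om\equiv q_k\eta_m\pmod 1$: the numerator has size $\less q_k/q_{m+1}$ while the denominator is $\asymp 1/q_{k+1}$, so $\abs{\esum_{q_m}(q_k\om)}\less q_k q_{k+1}/(q_m q_{m+1})$. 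Geometric growth of $\{q_k\}$ then makes $\sum_{k<m}q_k^{1-\alpha}q_{k+1}\less q_m^{2-\alpha}$ (dominated by its $k=m-1$ term), so the total $k<m$ contribution is $\less 1/(Cm\,q_m^\alpha)$. Summing, the main term dominates both tails and yields the stated lower bound.

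For (iii), set $L=\floor{c q_{m+1}/q_m}$ and $N_m=L q_m\asymp q_{m+1}$; the orbit decomposes into $L$ blocks of length $q_m$ anchored at $l q_m\om$, so
\[
\bsum{N_m}(0) \;=\; \sum_{l=0}^{L-1}\bsum{q_m}(l q_m\om).
\]
Every such $l$ lies in the admissible range of (ii), so each summand is $\ge c' q_m^{1-\alpha}$, yielding $\tfrac{1}{N_m}\bsum{N_m}(0)\ge c'/q_m^\alpha$. The principal difficulty throughout is the $k<m$ estimate in (ii): recognizing that $\norm{q_k q_m\om}$ is small (because $q_m\om\approx p_m$) is precisely what saturates the Denjoy--Koksma inequality and allows the main Fourier mode to dominate the lower-frequency ones.
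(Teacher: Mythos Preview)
Your proposal is correct and follows essentially the same route as the paper: the Weierstrass splitting for (i), the decomposition of the Birkhoff average into the diagonal mode $k=m$ (shown positive via $q_m\omega\equiv\eta_m$), the trivial high-frequency tail $k>m$, and the crucial low-frequency tail $k<m$ handled through the identity $\|q_mq_k\omega\|\less q_k/q_{m+1}$, followed by the block decomposition for (iii). The only noticeable difference is that you bound $\sum_{k<m}q_k^{1-\alpha}q_{k+1}$ by a constant multiple of its last term via the two-step geometric growth $q_{k+2}\ge 2q_k$, whereas the paper uses the cruder estimate $m\cdot q_{m-1}^{1-\alpha}q_m$; your version is slightly sharper but the overall argument is the same.
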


\begin{proof}
Since the sequence $\{q_k\}_{k\geq 1}$ is rapidly increasing (namely $q_k \ge 2^{(k-1)/2}$), the series $\sum_{k=1}^\infty \frac{1}{q_k^\alpha} \, e (q_k x)$ converges absolutely, hence uniformly, and its sum, the function $f(x)$ is well defined and continuous. Let us prove that it is in fact $\alpha$-H\"older continuous, which will then imply the same about its real part, the observable $\obs$.

Let $x, h \in [0, 1)$. Then
\begin{align*}
\abs{f(x+h) - f (x)} & = \abs{ \sum_{k=1}^\infty \frac{1}{q_k^\alpha} \,  \left( e (q_k (x+h)) - e (q_k x) \right) } \\
& = \abs{ \sum_{k=1}^\infty \frac{1}{q_k^\alpha} \,  \left( e (q_k h) - 1 \right) \, e (q_k x)  } \le  \sum_{k=1}^\infty \frac{1}{q_k^\alpha} \,  \abs{e (q_k h) - 1} \, .
\end{align*}

We split the last sum above into two parts, depending on whether $q_k h < 1$ or $q_k h \ge 1$. 
Let $l := \min \{ k \colon q_k h \ge 1\}$. Then $\frac{1}{q_l} \le h$,  $ q_{l-1} < \frac{1}{h}$ and we have: 
\begin{align*}
\abs{f(x+h) - f (x)} & \le  \sum_{k=1}^{l-1} \frac{1}{q_k^\alpha} \,  \abs{e (q_k h) - 1} + \sum_{k=l}^{\infty} \frac{1}{q_k^\alpha} \,  \abs{e (q_k h) - 1}\\
& \less \sum_{k=1}^{l-1} \frac{1}{q_k^\alpha} \,  q_k \, h +\sum_{k=l}^{\infty} \frac{1}{q_k^\alpha} 
\less  \left( \sum_{k=1}^{l-1} q_k^{1-\alpha} \right) \, h +  \frac{1}{q_l^\alpha} \\
& \less q_{l-1}^{1-\alpha} \, h +  \frac{1}{q_l^\alpha}  \le \frac{1}{h^{1-\alpha}} \, h + h^\alpha \less h^\alpha \, ,
\end{align*}
which proves the $\alpha$-H\"older continuity of $f$, and thus establishes item (i). 

\smallskip

Note that 
$ \int_\T f = \fc{f} (0) = 0$, so $ \int_\T \obs = 0$ as well.

Using~\eqref{Birkhoff sum kernel} we have that for all $x\in \T$,
\begin{align*}
\frac{1}{q_m} f^{(q_m)} (x) - \int_\T f  & =   \sum_{k=1}^\infty \frac{1}{q_k^\alpha} \,  \esum_{q_m} (q_k \om) \, e (q_k x) \\
 &   = \frac{1}{q_m^\alpha} \,  \esum_{q_m} (q_m \om) \, e (q_m x) \\
 & + 
 \sum_{k=m+1}^\infty \frac{1}{q_k^\alpha} \,  \esum_{q_m} (q_k \om)  \, e (q_k x) 
 +  \sum_{k=1}^{m-1} \frac{1}{q_k^\alpha} \,  \esum_{q_m} (q_k \om) \, e (q_k x) \, .
\end{align*}

Taking the real part on both sides we get:
\begin{align}\label{sharp eq2}
\frac{1}{q_m} \obs^{(q_m)} (x) - \int_\T \obs =  \Sigma_m (x) + \Sigma_{> m} (x) + \Sigma_{< m} (x) \, ,
\end{align}
where
\begin{align*}
\Sigma_m (x) &:=  \frac{1}{q_m^\alpha} \, \Re \,  \esum_{q_m} (q_m \om) \, e (q_m x) \\
\Sigma_{> m} (x) &:= \sum_{k=m+1}^\infty \frac{1}{q_k^\alpha} \,  \Re \, \esum_{q_m} (q_k \om)  \, e (q_k x) \\
\Sigma_{< m} (x) &:=  \sum_{k=1}^{m-1} \frac{1}{q_k^\alpha} \, \Re \, \esum_{q_m} (q_k \om) \, e (q_k x) \, .
 \end{align*}

We will show that for $x = l q_m \om$ with $0 \le l \less \frac{q_{m+1}}{q_m}$, the term $\Sigma_m (x)$ is positive and it dominates $\Sigma_{>m}$ and $\Sigma_{< m}$.

\smallskip

Let us first bound $\Sigma_m (l q_m \om)$ from below. To do that, note that if $z\in\C$ satisfies say $\abs{z-1}<\frac{1}{2}$, then $\Re z > \frac{1}{2}$. We then set up to verify that
$$\abs{ \esum_{q_m} (q_m  \om)  \, e (q_m \, l q_m \om) - 1} \less l \, \frac{q_m}{q_{m+1}} \, .$$

Indeed, recalling~\eqref{norm nt} and~\eqref{q_n-ineq1}, for all $0\le j < q_m$ we have 
\begin{align*}
\abs{ e (j \, q_m \, \om)  \, e (q_m \, l q_m \om) - 1  } & = \abs{  e \left(  (j+q_m l)   \, q_m \om  \right) - 1 } 
 \asymp \norm{ (j +  q_m l) \, q_m \om} \\
 & \less (j + q_m l) \,  \norm{q_m \om} <  \frac{j + q_m l}{q_{m+1}} \le 2 l \,  \frac{q_m}{q_{m+1}} \, .
\end{align*}

Averaging in $j$ we have:
\begin{align*}
\abs{ \esum_{q_m} (q_m  \om)  \, e (q_m \, l q_m \om) - 1}  
= \abs{ \frac{1}{q_m} \, \sum_{j=0}^{q_m-1}  e (j \, q_m \, \om)  \, e (q_m \, l q_m \om) - 1   }  \less l \, \frac{q_{m}}{q_{m+1}} <  \frac{1}{2}  \, ,
\end{align*}
where the last equality holds provided that $l \le c \,   \frac{q_{m+1}}{q_{m}} $, for an absolute constant $c$ (which could be made explicit).

Consequently, for all $0 \le l \le c \,  \frac{q_{m+1}}{q_{m}}$  we  have
\begin{align}\label{sharp lower bound}
\Sigma_m (  l q_m \om) =  \frac{1}{q_m^\alpha} \, \Re \,  \esum_{q_m} (q_m \om) \, e (q_m \, l q_m \om) \ge \frac{1}{2} \,  \frac{1}{q_m^\alpha} \, .
\end{align}

\smallskip

Next we bound $\Sigma_{>m} (x)$ uniformly in $x\in\T$. We have
\begin{align}\label{upper bound sigma > m}
\abs{\Sigma_{>m} (x)} & \le \sum_{k=m+1}^\infty \frac{1}{q_k^\alpha} \,  \abs{ \esum_{q_m} (q_k \om)}  \, \abs{ e (q_k x) } 
 \le \sum_{k=m+1}^\infty \frac{1}{q_k^\alpha}  \notag \\
 & = \sum_{\substack{k=m+1\\k \text{ even }}}^\infty \frac{1}{q_k^\alpha}  + \sum_{\substack{k=m+1\\k \text{ odd }}}^\infty \frac{1}{q_k^\alpha}  
\less  \frac{1}{q_{m+1}^\alpha} \, ,
\end{align}
where the last inequality holds because of the geometric progression of any sequence of the form $\left\{ q_{n + 2 k}^\alpha \right\}_{k \ge 0}$.

\smallskip

Finally, using~\eqref{exp sum 1},~\eqref{norm nt} and~\eqref{q_n-ineq1}, we can estimate the exponential sums in $\Sigma_{< m}$ in a sharper way as follows:
\begin{align*}
 \abs{ \esum_{q_m} (q_k \om) } & =  \frac{1}{q_m} \, \frac{ \abs{  1- e (q_m q_k \om) }  }{\abs{  1- e (q_k \om)  }   }
 \asymp  \frac{1}{q_m} \, \frac{ \norm{q_m q_k \om} }{ \norm{q_k \om} } 
 \less  \frac{1}{q_m} \, \frac{ q_k \norm{q_m \om}}{\norm{q_k \om} } 
 \less  \frac{1}{q_m} \, \frac{q_k \, q_{k+1}}{q_{m+1}} \, .
\end{align*}  

Then
\begin{align}\label{sharp upper bound}
\abs{ \Sigma_{< m} (x) } & \le  \sum_{k=1}^{m-1} \frac{1}{q_k^\alpha} \, \abs{ \esum_{q_m} (q_k \om) }   \less
  \sum_{k=1}^{m-1} \frac{1}{q_k^\alpha} \,   \frac{1}{q_m} \, \frac{q_k \, q_{k+1}}{q_{m+1}}
  =  \sum_{k=1}^{m-1} \frac{q_k ^{1-\alpha} \, q_{k+1}}{q_m \, q_{m+1}} \notag \\
 & \le m \, \frac{q_{m-1} ^{1-\alpha} \, q_{m}}{q_m \, q_{m+1}} = m \, \frac{q_{m-1} ^{1-\alpha}}{q_{m+1}} \, .
\end{align}

Combining~~\eqref{sharp eq2},~\eqref{sharp lower bound},~\eqref{upper bound sigma > m} and~\eqref{sharp upper bound}, for all $0 \le l \le c \frac{q_{m+1}}{q_{m}}$ we obtain
$$\frac{1}{q_m} \obs^{(q_m)} (l q_m \om) - \int_\T \obs \ \more  \frac{1}{q_m^\alpha} -   \frac{1}{q_{m+1}^\alpha}  -  m \, \frac{q_{m-1}^{1-\alpha}}{q_{m+1}}  \more \frac{1}{q_m^\alpha} \, ,$$
since
$q_{m+1} \ge C m \, q_m \ge C m \, q_m^\alpha \, q_{m-1}^{1-\alpha}$ for a large absolute constant $C$.

This completes the proof of item (ii). 

\smallskip

To establish item (iii) let $N_m := (\lbar+1) q_m$, where $\lbar \asymp \frac{q_{m+1}}{q_{m}}$ is the largest integer for which the conclusion of item (ii) still holds. Thus for all $0 \le l \le  \lbar$ we have
$$ \obs^{(q_m)} (l q_m \om)  \more \frac{q_m}{q_m^\alpha} \, .$$

Then $N_m \asymp q_{m+1}$ and
\begin{align*}
 \obs^{(N_m)} (0) &= \obs^{(q_m)} (0) + \obs^{(q_m)} (q_m \om) + \ldots + \obs^{(q_m)} (\lbar q_m \om) \\
& \more (\lbar+1) \,  \frac{q_m}{q_m^\alpha}  = N_m \,  \frac{1}{q_m^\alpha} \, ,  
\end{align*}
which completes the proof of the last item.
\end{proof}

\begin{proof}[Proof of Theorem~\ref{limitations intro}]
Let $\om\in (0, 1)$ be an irrational frequency and recall that if $\om = [a_1(\om), a_2(\om), \ldots, a_m(\om), \ldots ]$ is its continued fraction expansion, and if $\{q_m (\om)\}_{m\ge1}$ are the denominators of its convergents, then for all $m$, 
$\frac{q_{m+1}(\om)}{q_m(\om)} \asymp a_m(\om)$, so
$$q_{m+1} (\om)  \more m \, q_m (\om) \Leftrightarrow 	 a_m (\om) \more m \, .$$

Since the series $\sum_{m=1}^\infty \frac{1}{m} = \infty$, by Borel-Bernstein's theorem (see~\cite[Theorem 3.2.5]{Queffelec}) we have that for almost all $\om$, the inequality
$a_m (\om) \more m$ is satisfied for infinitely many integers $m$. 

In other words, there is a full measure set in $(0, 1)$ such that for every element $\om$ of that set, there is an increasing sequence of integers $\{m_k\}_{k\ge1}$ such that
$$\frac{q_{m_k+1}(\om)}{q_{m_k}(\om)} \asymp a_{m_k}(\om) \more m_k ,$$
that is, $q_{m_k+1}(\om) \more m_k \, q_{m_k}(\om)$. 

Consider  the observable $\obs$ defined as
$$\obs(x) := \Re \, \sum_{j=1}^\infty \frac{1}{q_j^\alpha} \, e (q_j x) .$$

By Proposition~\ref{sharp prop} item (i) $\obs$ is $\alpha$-H\"older continuous and by item (ii), for all $k\ge1$,
\begin{equation*}
\abs{\frac{1}{q_{m_k}} \obs^{(q_{m_k})} (0) - \int_\T \obs } \ge \const \frac{1}{q_{m_k}^\alpha}  \, ,
\end{equation*}
which concludes the proof of the theorem.
\end{proof}

\begin{remark}\label{limitation moc} 
The same argument applies to any modulus of continuity $\moc$. Indeed, defining 
$$f (x) := \sum_{k=1}^\infty \moc \left( 1/q_k \right)  \, e (q_k x) \quad \text{and} \quad \obs (x)  := \Re f (x) \, ,$$ 
it turns out that $\obs \in C^\moc (\T)$. Moreover, if $q_{m+1} \more m q_m$, then we get
$$\abs{\frac{1}{q_m} \obs^{(q_m)} (0) - \int_\T \obs } \ge \const \, \moc \left( \frac{1}{q_m} \right) \, ,$$ 
along with the other conclusions of Proposition~\ref{sharp prop} and of Theorem~\ref{limitations intro}.
\end{remark} 

\begin{remark}
Under  more specific arithmetic assumptions on $\om$, Proposition~\ref{sharp prop} item (iii) provides stronger limitations on the rate of convergence $\{\rho_N\}$, as explained below. Let  $N_m \asymp q_{m+1}$ as in item (iii) of Proposition~\ref{sharp prop}.

$\blob$ Assume that $\om \in \sDC_{\ga}$ but $q_{m+1} \asymp q_m \log^2 (q_m+1)$ for infinitely many indices $m$ (the set of such frequencies is of course nonempty, but by Borel-Bernstein's theorem it has zero measure). 
For all of these indices we then have 
$$\frac{1}{N_m} \obs^{(N_m)} (0) - \int_\T \obs \,  \more \, \frac{1}{q_m^\alpha}
\, \asymp \, \frac{\log^{2 \alpha} q_m}{q_{m+1}^\alpha} \, \more \, \frac{\log^{2 \alpha} N_m}{N_m^\alpha} \, ,$$
thus showing that the rate of convergence~\eqref{extension D-K eq} is quite tight.

$\blob$ Similarly, assuming that $\om \in \DC_{\ga, A}$, for all indices $m$ such that    $ q_{m+1} \asymp q_m^A$, we have
$$\frac{1}{N_m} \obs^{(N_m)} (0) - \int_\T \obs  \, \more \,  \frac{1}{q_m^\alpha}
\, \asymp \, \frac{1}{q_{m+1}^{\alpha/A}} \, \asymp \, \frac{1}{N_m^{\alpha/A}} \, ,$$
thus showing that the rate of convergence~\eqref{bound DC eq} is quite tight.

$\blob$ Moreover, assuming that $\beta (\om) > 0$, there is $\beta_0>0$ and there are infinitely many indices $m$ such that $q_{m+1} \more e^{\beta_0 q_m}$. For all of these indices we have
$$\frac{1}{N_m} \obs^{(N_m)} (0) - \int_\T \obs  \, \more \, \frac{1}{q_m^\alpha}
\, \more \, \frac{1}{\log^\alpha q_{m+1}} \, \more \,  \frac{1}{\log^\alpha N_m} \, ,$$
thus showing that the rate of convergence~\eqref{bound beta eq} is quite tight.
\end{remark}

\begin{remark}
Consider any strictly decreasing function $\rho \colon (0, \infty) \to (0, \infty)$ with $\lim_{t\to\infty} \rho (t) = 0$ (arbitrarily slowly). 

Let $\Gamma (t)$ be the inverse of the function $t \mapsto \log \left( \frac{1}{\rho (t)} \right)$. Note that the slower $\rho (t) \to 0$ as $t \to \infty$, the faster $\Gamma (t) \to \infty$ as $t \to \infty$. 

Let $\om \in (0, 1)$ be a frequency such that the denominators of its convergents satisfy $q_{m+1} \asymp \Gamma (q_m)$ for all (or for infinitely many) indices $m$. Depending on how fast $\Gamma (t) \to \infty$ as $t\to\infty$, the frequency $\om$ can be extremely Liouvillean.

Define the function
$\displaystyle f (x) := \sum_{k=1}^\infty e^{- q_k} \, e (q_k x)$. Then clearly $f$ is analytic so the observable $\obs (x) := \Re f (x)$ is analytic as well.
A similar argument to the one used in the proof of Proposition~\ref{sharp prop} shows that for some $N_m \asymp q_{m+1}$ we have
$$\frac{1}{N_m} \obs^{(N_m)} (0) - \int_\T \obs  \, \more \, e^{- q_m} \, \asymp \, \rho (q_{m+1}) \, \asymp \, \rho (N_m) \, .$$

In other words, the rate of convergence $\{\rho_N\}$ of the Birkhoff averages can be arbitrarily slow for an appropriately chosen Liouvillean frequency $\om$, even with an analytic observable.
\end{remark}

\section{The higher dimensional torus translation case}\label{higher dimension}
We begin with a review of some Fourier analysis notions on the additive group $\T^d$, $d\ge 2$ (see~\cite[Chapter 1.9]{Katznelson} and~\cite[Chapter 3]{Grafakos_CFA} for more details).

For every multi-index $\kb\in\Z^d$, define the multiplicative characters $e_{\kb} \colon \T^d \to \C$ by $e_{\kb} (\x) := e (\kb \cdot \x)$, where 
$\kb \cdot \x = k_1 x_1 + \ldots + k_d x_d$.

Let $\obs \in  L^2 (\T^d)$ and let $\kb \in \Z^d$ be a multi-index. The corresponding Fourier coefficient of $\obs$  is then
\begin{equation*}
\fc{\obs} (\kb) = \int_{\T^d} \obs (\x) \overline{e_{\kb} (\x)} \, d \x = \int_{\T^d} \obs (\x) e (- \kb \cdot \x) \, d \x \, .
\end{equation*}

We note here that as in dimension one,  the Fourier coefficients of a function $\phi \in C^\alpha (\T^d)$ have the decay
\begin{equation}\label{decay FC d}
\abs{ \fc{\obs} (\kb) } \le \const \, \norm{\obs}_\alpha \, \frac{1}{\abs{\kb}^\alpha}  \quad \text{for all } \ \kb \in \Z^d, \, \abs{\kb} \neq 0 \, .
\end{equation}
This follows from Fubini's theorem and the corresponding one variable estimate.

Consider the Fourier series expansion of $\obs \in  L^2 (\T^d)$:
\begin{equation*}
\obs (\x) \sim  \sum_{\kb \in \Z^d} \, \fc{\obs} (\kb) \, e_{\kb}  (\x) = \int_{\T^d} \obs + \sum_{\abs{\kb} \neq 0}  \, \fc{\obs} (\kb) \, e_{\kb} (\x) \, .
\end{equation*}

Given $n\ge 0$, the $n$-th (square) partial sum of the Fourier series of $\obs$ is
$$S_n \obs (x) := \sum_{\abs{\kb} \le n} \fc{\obs} (\kb) \, e_{\kb}  (\x) \, ,$$
which, as in the one variable case, may fail to converge (even pointwise) to $\obs$. 

%

We define the $d$-dimensional (square) Jackson kernel $\Jd_n \colon \T^d \to \R$ as 
$$\Jd_n  (x_1, \ldots, x_d) := J_n (x_1) \cdot \ldots \cdot J_n (x_d) \, .$$

Then (essentially by Fubini's theorem) $\Jd_n$ has similar properties to those of its one dimensional counterpart. More precisely, 
$\abs{\fc{\Jd_n}(\kb)} \less 1$ uniformly in $n$ and $\kb$. Moreover,
$\obs_n := \obs \ast \Jd_n$ is a trigonometric polynomial in $d$ variables of degree $\le n$ and if $\obs \in C^\alpha (\T^d)$, then for all $n\ge1$,
\begin{equation}\label{Jackson type d Holder}
 \norm{\obs_n - \obs}_\infty \le \const \, \norm{\obs}_\alpha \frac{1}{n^\alpha}  \, .
\end{equation}

Furthermore, we have the following estimate on the (Fourier) coefficients of $\obs_n$: if $\kb \in \Z^d$ with $0 < \abs{\kb} \le n$ then
\begin{equation}\label{coefs obs n}
\abs{\fc{\obs_n} (\kb)}  = \abs{\fc{\obs \ast \Jd_n} (\kb)} =  \abs{\fc{\obs} (\kb)} \, \abs{\fc{\Jd_n} (\kb)} \le \const \, \norm{\obs}_\alpha \, \frac{1}{\abs{\kb}^\alpha} \, ,
\end{equation}
where the last inequality follows from~\eqref{decay FC d} and the fact that $\abs{\fc{\Jd_n} (\kb)} \less 1$.

\smallskip

We note that, as in dimension one, similar estimates to~\eqref{decay FC d}~\eqref{Jackson type d Holder} and~\eqref{coefs obs n}  hold for any modulus of continuity.

\begin{proof}[Proof of Theorem~\ref{thm d translation intro}]
Fix $N\ge 1$ and let $n = \smallo (N)$ (to be chosen later). Recall that $\obs_n = \obs \ast \Jd_n $ is  a trigonometric  polynomial of degree $\le n$ on $\T^d$, that is
$$\obs_n (\x) = \sum_{\abs{\kb} \le n} \, \fc{\obs_n} (\kb) \, e_{\kb} (\x) = \int_{\T^d} \obs_n +   \sum_{1 \le \abs{\kb} \le n} \, \fc{\obs_n} (\kb) \, e_{\kb} (\x) \, .$$

Then 
$$\frac{1}{N}  \bsumap{N}   (\x) = \int_{\T^d} \obs_n +  \sum_{1 \le \abs{\kb} \le n} \, \fc{\obs_n} (\kb) \, \frac{1}{N} \, \bsumexpk{N}  (\x) \, ,$$
so for all $\x\in \T^d$,
$$\abs{\frac{1}{N} \, \bsumap{N} \, (\x) - \int_{\T^d} \obs_n} \le \sum_{1 \le \abs{\kb} \le n} \, \abs{\fc{\obs_n} (\kb)} \, \frac{1}{N} \, \abs{ \bsumexpk{N}  (\x)} \, .$$

Let us estimate the Birkhoff sums of the multiplicative characters $e_{\kb}$. 
\begin{align*}
\bsumexpk{N}  (\x) & = \sum_{j=0}^{N-1} e_{\kb} (\x + j \omb) =  \sum_{j=0}^{N-1}  e_{\kb} (\x)  \, e_{\kb} (j \omb) \\
& = e_{\kb} (\x)  \,   \sum_{j=0}^{N-1} e (\kb \cdot j \omb) = e_{\kb} (\x)  \,   \sum_{j=0}^{N-1} e (j \, \kb \cdot \omb) \\
& = e_{\kb} (\x)  \,  \frac{1- e (N \, \kb \cdot \omb)}{1- e (\kb \cdot \omb)} \, .
\end{align*}

Hence for all $\x \in \T^d$, also using the Diophantine condition~\eqref{DC}, 
$$\abs{\bsumexpk{N} (\x) } \le \abs{ \frac{1- e (N \, \kb \cdot \omb)}{1- e (\kb \cdot \omb)}}  \le \frac{1}{\norm{\kb \cdot \omb}}  \le \frac{1}{\ga} \, \abs{\kb}^A  \, .$$

Combining this estimate on $\bsumexpk{N}$ with~\eqref{coefs obs n},  it follows that 
\begin{align*}
\norm{\frac{1}{N} \, \bsumap{N}  - \int_{\T^d} \obs_n}_{\infty} & \le \frac{\const}{\ga} \, \norm{\obs}_\alpha \, \frac{1}{N} \,  \sum_{1 \le \abs{\kb} \le n}  \frac{1}{\abs{\kb}^\alpha} \, \abs{\kb}^A 
\le   \frac{\const}{\ga} \, \norm{\obs}_\alpha \,  \frac{n^{A+d-\alpha}}{N} \, .
\end{align*}

And finally, combining the last estimate with~\eqref{Jackson type d Holder} we have:
\begin{align*}
\norm{\frac{1}{N} \, \bsum{N}  - \int_{\T^d} \obs}_{\infty} & \le \frac{\const}{\ga} \, \norm{\obs}_\alpha \,  \frac{n^{A+d-\alpha}}{N} + 
\const \, \norm{\obs}_\alpha \frac{1}{n^\alpha}  \\
& \le  \frac{\const}{\ga} \, \norm{\obs}_\alpha \, N^{ - \frac{\alpha}{A+d}} \, ,
\end{align*}
provided we choose $n := N^{\frac{1}{A+d}}$.
\end{proof}

\section{The affine skew product case}\label{affine skew product}
The rate of convergence of the Birkhoff means over the affine skew product toral transformation is established using the same scheme as the one employed for the torus translation. In this case, however, the Birkhoff sums of the multiplicative characters $e_{\kb} (\x)$ on $\T^d$ are much more complex, leading to certain exponential sums of degree $d$, which are estimated using Weyl's finite differencing method (see~\cite{Montgomery}).

\begin{proof}[Proof of Theorem~\ref{skew product thm intro}]
Fix $N\ge 1$ and let $n = \smallo (N)$ (to be made explicit later). As before, let
$\obs_n := \obs \ast \Jd_n$ be the (optimal) approximation of $\obs$ by a  trigonometric polynomial of degree $n$.
From~\eqref{Jackson type d Holder} we derive that for all $\x\in\T^d$,
\begin{equation}\label{skew shift 10}
\frac{1}{N} \, \bsum{N} (x) - \int_{\T^d} \obs = \frac{1}{N} \, \bsumap{N} (x) -   \int_{\T^d} \obs_n + \bigo \left(\frac{1}{n^\alpha}\right) \, ,
\end{equation}
where the implicit constant in $\bigo \left(\frac{1}{n^\alpha}\right)$ is an absolute multiple of $\norm{\obs}_\alpha$. 
Writing 
\begin{equation*}
\obs_n (\x) = \sum_{\abs{\kb} \le n} \, \fc{\obs_n} (\kb) \, e_{\kb} (\x) = \int_{\T^d} \obs_n +   \sum_{1 \le \abs{\kb} \le n} \, \fc{\obs_n} (\kb) \, e_{\kb} (\x) \, ,
\end{equation*}
it follows that
$$\frac{1}{N} \, \bsumap{N} (x) - \int_{\T^d} \obs_n = \sum_{1 \le \abs{\kb} \le n} \, \fc{\obs_n} (\kb) \, \frac{1}{N} \, \bsumexpk{N}  (\x) \, .$$

Hence from~\eqref{coefs obs n}, for all $\x\in\T^d$,
\begin{align*}
\abs{ \frac{1}{N} \, \bsumap{N} (x) - \int_{\T^d} \obs_n  } & \le \sum_{1 \le \abs{\kb} \le n} \, \abs{\fc{\obs_n} (\kb)} \, \frac{1}{N} \, \abs{ \bsumexpk{N}  (\x)} \\
& \less \norm{\obs}_\alpha  \, \sum_{1 \le \abs{\kb} \le n} \, \frac{1}{\abs{\kb}^\alpha} \, \frac{1}{N}  \abs{ \bsumexpk{N}  (\x)}  \, .
\end{align*}

We now estimate the Birkhoff sums $\bsumexpk{N}  (\x)$  of the Fourier modes $e_{\kb} (\x)$, over the transformation $S_\om$ . We have:
$$ \bsumexpk{N}  (\x) = \sum_{j=0}^{N-1} e_{\kb} (S_\om^j \x) =  \sum_{j=0}^{N-1} e ( \kb \cdot S_\om^j \x) \, .$$

The expression above turns out to be a polynomial exponential sum, which we estimate using Weyl's theorem. 

To see how $\bsumexpk{N}  (\x)$ can be regarded as an exponential sum, let us first consider the (less technical) case $d=2$.

Given $j \in \N$, the $j$-th iterate of the transformation $S_\om$ is 
\begin{align*}
S_\om^j (x_1, x_2) & = \left(x_1+j x_2 + \frac{j (j-1)}{2} \om, x_2 + j \om\right) \\
& =  \left( \frac{\om}{2} \, j^2 + (x_2 - \frac{\om}{2}) \, j + x_1, j \, \om + x_2 \right) \, .  
\end{align*} 

Fix $\x = (x_1, x_2) \in \T^2$ and $\kb = (k_1, k_2) \in \Z^2$ with $\abs{k}\neq 0$. Then for all  $j \in \N$,
\begin{align*}
p_{\kb} (j) := \kb \cdot S_\om^j \x = \frac{k_1 \om}{2} \, j^2 + \left( k_1 (x_2 - \frac{\om}{2}) + k_2 \om  \right) \, j + k_1 x_1 + k_2 x_2 \, . 
\end{align*}

\begin{enumerate}
\item[$\blob$] If $k_1\neq0$, then $p_{\kb} (j)$ is a polynomial in $j$ of degree $2$,  with leading coefficient $a_{\kb} =  \frac{k_1}{2} \om$.  
\item[$\blob$] If $k_1 =0$ then $k_2\neq0$, so $p_{\kb} (j)$ is a polynomial in $j$ of degree $1$, with leading coefficient $a_{\kb} =  k_2 \om$.  
\end{enumerate}

Let us now consider the general case, $d\ge2$. Given $\x= (x_1, x_2, \ldots, x_d) \in \T^d$ and $j\in\N$, it is not hard to see that the $j$-th iterate of the transformation $S_\om$ is given by 
$$S_\om^j = \left(y_1^j, \, y_2^j, \, \ldots, \, y_d^j \right) \, ,$$
where
\begin{align*}
 y_{1}^{j} & = \binom{j}{0} x_1 + \binom{j}{1} x_2 + \ldots + \binom{j}{d-1} x_d + \binom{j}{d} \om \\
 y_{2}^{j} & = \binom{j}{0} x_2 + \binom{j}{1} x_3 + \ldots  +  \binom{j}{d-2} x_d + \binom{j}{d-1} \om   \\
 \vdots & \\
 y_{d}^{j} & = \binom{j}{0} x_d + \binom{j}{1} \om = x_d + j \om \, . 
\end{align*}

These formulas can be verified by induction in $j$, using the identity $\binom{j}{l} + \binom{j}{l-1} = \binom{j+1}{l}$ and the conventions $\binom{j}{0} = 1$ and $\binom{j}{l} = 0$ if $l > j$. Recall also that
$$\binom{j}{d} = \frac{j !}{(j-d) ! \, d !} = \frac{ j \, (j-1) \, \ldots \, (j-d+1) }{d !} = \frac{1}{d !} j^d + \text{ lower order terms in } j .$$ 

Fix $\x = (x_1, x_2, \ldots, x_d) \in \T^d$ and $\kb = (k_1, k_2, \ldots, k_d) \in \Z^d$ with $\abs{\kb} \neq 0$ and denote $p_{\kb} (j) := \kb \cdot S_\om^j \x$.
\begin{enumerate}
\item[$\blob$] If $k_1\neq0$, then 
$$p_{\kb} (j) = \kb \cdot S_\om^j \x = \frac{k_1}{d !} \om \, j^d + \text{ lower order terms in } j .$$ 
That is, $p_{\kb} (j) $ is a polynomial of degree $d$,  with leading coefficient $a_{\kb} =  \frac{k_1}{d !} \om$.  

\item[$\blob$] If $k_1 = 0$  but $k_2 \neq 0$, then  
$$p_{\kb} (j) = \kb \cdot S_\om^j \x = \frac{k_2}{(d-1) !} \om \, j^{d-1} + \text{ lower order terms in } j .$$ 
That is, $p_{\kb} (j) $ is a polynomial of degree $d-1$,  with leading coefficient $a_{\kb} =  \frac{k_2}{(d-1) !} \om$.  

\medskip

\noindent \ldots

\medskip

\item[$\blob$] If $\kb' := (k_1, \ldots, k_{d-1}) = (0, \ldots, 0)$, then $k_d\neq 0$ and
$$p_{\kb} (j) = \kb \cdot S_\om^j \x = k_d \om \, j + k_d x_d ,$$ 
which is a polynomial of degree $1$  with leading coefficient $a_{\kb} =  k_d \om$.  
\end{enumerate}

In this last case, using the Diophantine condition~\eqref{sDC}, we have
\begin{align*}
\abs{\sum_{j=0}^{N-1} e (p_{\kb} (j) ) } & = \abs{\sum_{j=0}^{N-1} e ( k_d \om \, j + k_d x_d ) } = \abs{\sum_{j=0}^{N-1} e ( j \, k_d \om ) }
= \abs{ \frac{ 1- e (N \, k_d \om)  }{ 1- e (k_d \om) }  } \\
& \le \frac{1}{\norm{k_d \om}} \le \frac{1}{\ga} \, \abs{k_d} \log^2 (\abs{k_d}+1) \le  \frac{1}{\ga} \, \abs{\kb} \log^2 (\abs{\kb}+1)  \le \frac{1}{\ga} \, n^{1+\smallo(1)} \, .
\end{align*}

Therefore, uniformly in $x\in\T^d$,
\begin{equation}\label{skew shift eq7}
\sum_{1 \le \abs{\kb} \le n, \, \abs{\kb'} = 0} \, \frac{1}{\abs{\kb}^\alpha} \, \frac{1}{N}  \abs{ \bsumexpk{N}  (\x)} \less  \frac{1}{\ga} \, \frac{n^{2+\smallo(1)}}{N} \, .
\end{equation}

In all other cases, let $1\le i \le d-1$ be the first index such that $k_i \neq 0$. Then $p_{\kb} (j)$ is a polynomial of degree $d_{\kb} = d-i+1 \in \{2, \ldots,  d\}$, with leading coefficient $a_{\kb} = \frac{k_i}{(d-i+1) !} \om$. Estimating the corresponding exponential sum
$$\sum_{j=0}^{N-1} e (p_{\kb} (j) )$$
is a much more subtle problem in analytic number theory. The trivial bound $N$ is of course useless, as we need a bound of order $N^{1-\beta}$, with $\beta > 0$, the larger the better. 
Weyl's differencing method reduces an exponential sum over a polynomial of degree $d$ to one over a polynomial of degree $d-1$, and eventually, by induction, to an exponential sum over a linear polynomial like above (in the case $\abs{\kb'}=0$). 

More precisely, Weyl's theorem (see Theorem 2 in~\cite[Chapter 3]{Montgomery}) implies the following: if $\frac{p}{q}$ is a principal convergent of $a_{\kb}$, then  
\begin{equation}\label{skew shift Weyl}
\abs{ \sum_{j=0}^{N-1} e (p_{\kb} (j) ) } \less_{d, \ep} N^{1+\ep} \, \left( \frac{1}{q} + \frac{1}{N} + \frac{q}{N^d} \right)^\delta \, ,
\end{equation}
where $\delta = \frac{1}{2^{d-1}}$ and $\ep>0$ is arbitrarily small.\footnote{The power $\ep$ above is related to the bound on the number of divisors function: $d (m) \less_{\ep} m^{\ep}$, which holds with any $\ep>0$. In fact, one could choose $\ep = \frac{1}{\log \log m}$. }

There is a principal convergent $\frac{p}{q}$ of $a_{\kb}$ with $q\asymp N$. Indeed, since $\om\in\sDC_\ga$, it is easy to see that $a_{\kb} = \frac{k_i}{(d-i+1) !} \om \in \sDC_{\ga'}$, where
$$\ga' = \ga \frac{1}{\abs{k_i} (d-i+1) !} \ge \ga \frac{1}{\abs{\kb} d !} = \frac{\ga}{d !} \,  \frac{1}{\abs{\kb}} \, .$$ 

Then by~\eqref{arithmetic star}, there is a principal convergent $\frac{p}{q}$ of $a_{\kb}$ with
$$N \le q \le \frac{1}{\ga'} \, N \log^2 N \le \frac{d !}{\ga} \, \abs{\kb} \, N \log^2 N .$$

By Weyl's theorem (namely by~\eqref{skew shift Weyl}),
\begin{align*}
\abs{ \sum_{j=0}^{N-1} e (p_{\kb} (j) ) } & \less_{d, \ep} N^{1+\ep} \, \left( \frac{1}{N} + \frac{d !}{\ga} \,  \frac{ \abs{\kb} \, \log^2 N}{N^{d-1} }  \right)^\delta
 \less_{d, \ga, \ep} N^{1+\ep} \, \frac{ \abs{\kb}^\delta}{N^\delta} \log^{2 \delta} N \\
& \less_{d, \ga} N^{1-\delta+\smallo(1)} \abs{\kb}^\delta \, ,
\end{align*}
where we grouped in the exponent $\smallo(1)$ any extra logarithmic type factors.

Therefore, uniformly in $x\in\T^d$,
\begin{align}\label{skew shift eq8}
\sum_{1 \le \abs{\kb} \le n, \, \abs{\kb'} \neq 0} \, \frac{1}{\abs{\kb}^\alpha} \, \frac{1}{N}  \abs{ \bsumexpk{N}  (\x)} & \less_{d, \ga} \frac{N^{1-\delta+\smallo(1)}}{N} \, \sum_{1 \le \abs{\kb} \le n, \, \abs{\kb'} \neq 0} \, \frac{1}{\abs{\kb}^\alpha} \,  \abs{\kb}^\delta \notag \\
& \le  \frac{n^{\delta-\alpha+d}}{N^{\delta-\smallo(1)}} \, .
\end{align}

Putting together~\eqref{skew shift eq7}, ~\eqref{skew shift eq8}, ~\eqref{skew shift 10}, we conclude that
\begin{align*}
\norm{\frac{1}{N} \bsum{N} - \int_{\T^d} \obs }_\infty \less_{d, \ga} \, \norm{\obs}_\alpha \, \left(  \frac{n^{2+\smallo(1)}}{N} +  \frac{n^{\delta-\alpha+d}}{N^{\delta-\smallo(1)}}  + \frac{1}{n^\alpha} \right) \, .
\end{align*}

The conclusion of the theorem follows by choosing $n = N^{\frac{\delta}{\delta+d}-\smallo(1)}$. 
\end{proof}

\medskip

\begin{remark} Unlike in the case of the one dimensional torus translation, the rate of convergence $\bigo \left(\frac{1}{N^\beta}\right)$ of the Birkhoff means for the other two models considered in this paper are probably far from optimal. 

It would be interesting to see if using multidimensional continued fractions could improve (or indeed, optimize) the estimates for the higher dimensional torus translation model. 

In the affine skew product case, the estimates could be slightly improved in high dimensions ($d > 10$), by combining Weyl's differencing method of bounding exponential sums with that of Vinogradov.  It would also be interesting to obtain some (any) lower bounds on the rate of convergence for this model. 

Finally, it is evident from the proofs that other types of transformations can be analyzed using the same scheme. It would thus be interesting to obtain a more general framework for the applicability of this method.
\end{remark}

\medskip

\subsection*{Acknowledgments} The first two authors are grateful to Qi Zhou for his suggestions, which helped improve the final version of the manuscript.

S. Klein has been supported in part by the CNPq research grant 306369/2017-6 and by a research grant from his institution (PUC-Rio).  

X-C. Liu has been supported by the FAPESP postdoctoral grant 2018/03762-2. He would also like to thank Lorenzo D\'iaz for the hospitality during his visit to PUC-Rio in October 2018, when he joined this project. 

A. Melo has been supported by CAPES and CNPq postgraduate grants.

\bigskip

\providecommand{\bysame}{\leavevmode\hbox to3em{\hrulefill}\thinspace}
\providecommand{\MR}{\relax\ifhmode\unskip\space\fi MR }
\providecommand{\MRhref}[2]{%
  \href{http://www.ams.org/mathscinet-getitem?mr=#1}{#2}
}
\providecommand{\href}[2]{#2}

\end{document}